\newtheorem{theo}{Theorem}[section]
\newtheorem{defi}[theo]{Definition}
\newtheorem{prop}[theo]{Proposition}
\newtheorem{coro}[theo]{Corollary}
\newtheorem{assu}{Assumption}
\newtheorem{exam}{Example}
\newtheorem{lemm}[theo]{Lemma}
\newtheorem{rema}[theo]{Remark}
\def\R{\mathbb{R}}
\def \N{\mathbb{N}}
\def \Z{\mathbb{Z}}
\def\w{\omega}
\def\E{\mathbb{E}} 
\def\Es{\mathbb{E}^\star}
\def\P{\mathbb{P}} 
\def\Ps{\mathbb{P}^\star}
\def\PP{\mathbf{P}} 
\def\EE{\mathbf {E}} 
\newcommand{\PPs}{\mathbf{P}^{\star}}
\newcommand{\EEs}{\mathbf{E}^{\star}}
\def\1{\mathds{1}} 
\newcommand{\Var}{\mathbb{V}\mbox{ar}}
\def\pb{\mathbf p}
\def\pbs{\pb^\star}
\def\ab{\mathbf a}
\def\abs{\ab^\star}
\def\uab{{\underline{\ab}}}
\def\betab{\boldsymbol{\beta}}
\def\betas{\boldsymbol{\beta}^\star}
\newcommand{\bbeta}{\overline{ \betab}}
\def\qb{\mathbf q}
\def\t{\theta}
\newcommand{\ts}{{\theta^{\star}}}
\newcommand{\htet}{\widehat \theta_n}
\newcommand{\btet}{\overline{ \theta_n}}
\newcommand{\baa}{\overline{ \ab_n}}
\newcommand{\bpp}{\overline{ \pb_n}}
\def \eps{\varepsilon} 
\def\SS{\mathfrak{S}}
\def\FF{\mathcal{F}}
\def\to{\rightarrow}
\def\8{\infty}
\newcommand{\ii}{ {\hat \imath} }
\definecolor{lilas}{RGB}{182, 102, 210}
\newcommand{\argmax}{\mathop{\rm Argmax}}
\newcommand{\argmin}{\mathop{\rm Argmin}}
\def\dd{\mathrm{d}} 
\def\ee{\mathrm{e}} 
\def\to{\rightarrow}
\def\KL{d_{\mathrm{KL}}}
\def\nn{\nonumber}
\def\RR{\mathcal{R}}
\def\Rn{{\mathcal R}_n}
\def\GG{\mathcal{G}}
\def\DD{\mathcal{D}}
\def\VV{\mathcal{V}}
\begin{document}

\title{Maximum likelihood estimator consistency for recurrent random walk in
 a parametric random environment with finite support}

\author{F. {\sc Comets}\footnote{Laboratoire Probabilités et Modélisation Aléatoire, Universit\'e Paris Diderot, UMR~CNRS~7599, 75205 Paris cedex 13, France. 
Email: {\tt comets@math.univ-paris-diderot.fr}; 
$^\dag$ Laboratoire de Mathématiques et Modélisation d'\'Evry, Universit\'e d'\'Evry Val d'Essonne, UMR~CNRS~8071, USC~INRA, 23 Boulevard de France 91037 Evry cedex, France. 
E-mail: {\tt mikael.falconnet@genopole.cnrs.fr, dasha.loukianova@univ-evry.fr, oleg.loukianov@u-pec.fr}
}
\and
M. {\sc Falconnet}$^{\dag}$  
\and
O. {\sc Loukianov}$^{\dag}$ 
\and
D. {\sc Loukianova}$^{\dag}$ 
}

\maketitle

\begin{abstract}
\noindent We  consider  a  one-dimensional   recurrent  random  walk  in  random
environment (RWRE)  when the environment is i.i.d.  with a parametric,
finitely supported distribution. Based  on a single observation of the
path,  we provide  a maximum  likelihood estimation  procedure  of the
parameters of the environment. 
Unlike most of the classical maximum likelihood approach, the limit of
the criterion  function is in general a  nondegenerate random variable
and convergence does not hold in probability.  Not only
the leading  term but also the  second order asymptotics  is needed to
fully identify the unknown  parameter. We present different frameworks
to illustrate  these facts. We also explore  the numerical performance
of our estimation procedure.
\end{abstract}

{\it Key words} :  Recurrent regime,  maximum likelihood estimation,  random walk in
  random environment.
{\it MSC 2010} : Primary 62M05, 62F12; secondary 62F12.

\section{Introduction}

Since the  pioneer works  of \cite{chernov} and  \cite{temkin}, random
walks in random environments (RWRE) have attracted many probabilists and
physicists,  and the  related literature  in these  fields  has become
richer and  source of fine  probabilistic results that the  reader may
find  in surveys  including~\cite{Hughes}  and~\cite{ZeitouniSF}.  The
literature dealing  with the  statistical analysis of  RWRE is  far from being as
rich  and  we  aim at  making  a  fundamental 
contribution  to  the inference  of  parameters  of the  environment
distribution  for a  one-dimensional  nearest
neighbour path.

Let  $\w=(\w_x)_{x\in\Z}$ be an  independent and
identically distributed  (i.i.d.) collection of  $(0,1)$-valued random
variables with a parametric distribution~$\eta_\t$ of the form
\begin{equation} \label{eq:model} 
  \eta_\theta = \sum_{i=1}^d p_i \delta_{a_i},
\end{equation}
with $d$ an integer, $\pb=(p_i)_{1\leq i \leq d}$ a probability vector
and  $\ab=(a_i)_{1\leq i  \leq  d}$ the  ordered  support. We  further
assume that $d \geq 2$ is known, and the unknown 
parameter is $\t=(\ab,\pb)$. 

Denote by
$\P^{\theta}=\eta_{\theta}^{\otimes \Z}$ the law on $ (0,1)^{ \Z}$ of
the environment  $\w$ and by $\E^{\theta}$ the  expectation under this
law.  The  process~$\w$ represents a  random environment in  which the
random walk evolves. 
For  fixed environment  $\w$, let  $X=(X_t)_{t\in\Z_+}$ be  the Markov
chain on $\Z_+$ starting at $X_0=0$ and with transition probabilities 
$P_{\w}(X_{t+1}=1|X_t=0)=1$, and for $x>0$
\[  P_{\w}(X_{t+1}=y|X_t=x)=\left  \{\begin{array}{lr} \w_x&\mbox{if}\
y=x+1,\\ 1-\w_x&\mbox{if}\ y=x-1,\\ 0&\mbox{otherwise}.
\end{array}\right.
\]
For  simplicity, we  stick to  the RWRE  on the  positive integers
reflected at $0$, but our results  apply for the RWRE on the integer
axis as  well.   The symbol $P_{\w}$  denotes the measure  on the
path space of 
$X$ given 
$\w$, usually  called \emph{quenched} law. The  (unconditional) law of
$X$ is given by
\[ 
\PP^{\theta}(\cdot)=\int P_{\omega}(\cdot)\dd\P^{\theta}(\omega),
\]
this  is the  so-called \emph{annealed}  law.  We  write  $E_{\w}$ and
$\EE^{\theta}$   for   the   corresponding   quenched   and   annealed
expectations, respectively.  

Random environment is a classical paradigm for inhomogeneous media which possess 
some  regularity at  large scale.  Introduced by  \cite{chernov}  as a
model for DNA replication, RWRE was recently used by 
\cite{Balda_06, Balda_07} and \cite{Andreo}
to analyse experiments on DNA unzipping,
pointing the need of sound statistical procedures for these models.
In \eqref{eq:model} we restrict the model to environments with finite
support, a framework which already covers  many interesting applications 
and also reveals the main features of  the estimation
problem.  Considering  a
general  setup would  increase  the technical  complexity without  any
further appeal. 

The behaviour of the process $X$ is related to the ratio sequence
\begin{equation}
  \label{eq:rho} \rho_x=\frac{1-\w_x}{\w_x}, \quad x \in {\Z_+},
\end{equation}
and  we refer  to~\cite{Sol}  for the  classification  of $X$  between
transient  or  recurrent  cases  according  to  whether  $\E^{\t}(\log
\rho_0)$  is different or  not from  zero. The  transient case  may be
further  split   into  two  sub-cases,   called  \emph{ballistic}  and
\emph{sub-ballistic}  that correspond  to  a linear  and a  sub-linear
displacement for the walk, respectively.

\cite{Comets_etal}  provided a maximum  likelihood estimator  (MLE) of
the  parameter  $\t$  in  the transient  \emph{ballistic}  case.   The
estimator  maximizes  the annealed  log-likelihood  function, it  only
depends on the  sequence of the number of left  steps performed by the
random walk.   In the \emph{ballistic} transient  case this normalized
criterion  function  converges   in  probability  to  a  \emph{finite}
deterministic limit  function, which identifies the true  value of the
parameter.   \cite{Comets_etal}  establishes  the consistency  of  MLE
while  asymptotic   normality  together  with   asymptotic  efficiency
(namely,  that it  asymptotically  achieves the  Cramér-Rao bound)  is
investigated in~\cite{FLM}.  \citeauthor{DashaArnaud} proved that a slight
modification  of   the  above  criterion  function   also  provides  a
well-designed limiting  function in the  \emph{sub-ballistic} case. In
all these  works, the results rely  on the branching  structure of the
sequence of the number of left  steps performed by the walk, which was
originally observed by \cite{KKS}. 

The salient  probabilistic feature of  a recurrent RWRE is  the strong
localization  revealed by \cite{sinai}.  Inhomogeneities in  the medium
create  deep  traps the  walker  falls  into.   Typically, on  a  time
interval $[0,n]$, the  walker sneaks most of the  time around the very
bottom of  the main  ``valley'' which is  at random distance  of order
$\log^2 n$. Visiting  repeatedly the medium at the  bottom of the main
"valley",  it collects  precise information  there.  Unfortunately the
medium  at  the   bottom  is  {\em  not  typical}   from  the  unknown
distribution $\eta_\theta$, but on the contrary, it is strongly biased
by the  implicit information  that there is  a deep trap  right there.
However, the localization mechanism can be completely analysed using  the  analogy between  nearest neighbour walks and
electrical networks, as explained 
in the book  of \cite{DoSn}.  
From \cite{GPS}, we know that  the empirical distributions of the RWRE
converge weakly to a 
certain  limit law which  describes the  stationary distribution  of a
random walk in an  \emph{infinite} valley whose construction goes back
to \cite{Golosov}. 
%
This  allows to unbias  our observation  of the  medium, and  to prove
consistency of MLE. 
We  recall   at  this  point   the  moment  estimator   introduced  in
\cite{AdEn},  which  relies on  the  observation  that  the step  when
leaving a site $x$ for the  first time yields an unbiased estimator of
the environment.

In the course  of the proof, expanding the  log-likelihood for a large
observation time $n$, we prove convergence of the first order 
term  at scale  $n$ and  of  the second  order term  at scale  $\log^2
n$. Though the first order term is random in general, 
it allows to identify the support $\ab$ of 
the  environment, while  the second  one  with a  correct estimate  of
$\ab$, allows to identify the probability vector $\pb$.  
This discrepancy  reflects that the amount of  information gathered on
the unknown parameter $\ab$ is proportional 
to the duration of the observation,  whereas the one for $\pb$ is only
proportional to the number of distinct visited sites.  
As  emphasized   above,  the   expansion  of  the   likelihood,  Lemma
\ref{lemm:expansion} below, 
is  the  key.   Therefore  it  is  natural  to   introduce  a  maximum
pseudo-likelihood  estimator (MPLE),  defined by  the above  first two
terms of the expansion, 
as a intermediate step to study MLE.

The  rest   of  the   article  is  organized   as  follows.   In
Section~\ref{sect:esti}, we present the 
construction  of  our  M-estimators   (\emph{i.e.}  an  estimator
maximising some criterion function), state the assumptions required on
the       model      as       well       as      our       consistency
results.   Sections~\ref{sect:support}  and  Section~\ref{sect:probas}
are respectively devoted to the proofs of the 
consistency   result  for   the   estimators  of   $\ab$,  and   $\pb$
respectively.   Section~\ref{sect:ex}   presents   some  examples   of
environment distributions for which  we provide an explicit expression
of the limit of the criterion function, and check whether it is a nondegenerate or a constant random variable. Finally, numerical experiments 
are presented in Section~\ref{sect:simus}, focusing on the three 
examples that were developed in Section~\ref{sect:ex}.


\section{Statistical problem, M-estimators and results} 
\label{sect:esti}
We  address the  following  statistical problem:  we  assume that  the
process   $X$   is   generated   under  the   true   parameter   value
$\ts=(\ab^\star, \pb^\star)$, an interior  point of the parameter space
$\Theta$,  and we want  to estimate  the unknown  $\ts$ from  a single
observation  $(X_t)_{0 \leq  t \leq  n}$ of  the RWRE  path  with time
length $n$.  Let $d \geq 2$ an 
integer. We always assume 
that $\Theta \subset (0,1)^{2d}$ is compact  and satisfies 
Assumptions~\ref{as:recvar} and~\ref{as:ell}  below, which ensure that
the environment is recurrent and has $d$ atoms.


\begin{assu}[Recurrent environment] \label{as:recvar} For
any $\theta=(\ab, \pb)$ in $\Theta$,
\begin{equation}  \label{eq:assumpp} p_i  >0, \mbox{  for any  $i \leq
d$,} \quad \mbox{and} \quad \sum_{i=1}^d p_i \log \frac{1-a_i}{a_i}=0.
\end{equation}
\end{assu}

\begin{assu}[Identifiability] \label{as:ell} 
For  all  $\theta$  in  $\Theta$, 
\begin{equation} \label{eq:assumpa} 
0<a_1<a_2<\ldots<a_d < 1.
\end{equation}
\end{assu} 
Note  that,  since  $\Theta$  is  compact, there  exists  $\eps_0$  in
$(0,1/2d)$ such that 
\begin{equation}
  \label{equa:eps0}
  a_1 \geq \eps_0, \quad 1-a_d \geq \eps_0, \quad a_{i+1} - a_{i} \geq
  \eps_0, \quad  p_i \geq \eps_0, \quad \mbox{for any   $i \geq 1$.} 
\end{equation}


We shorten to $\PPs$ and  $\EEs$ (resp.  $\Ps$ and $\Es$) the annealed
probability $\PP^{\ts}$  and its corresponding expectation~$\EE^{\ts}$
(resp. the law of the environment~$\P^{\ts}$ and its corresponding 
expectation~$\E^{\ts}$) under parameter value~$\ts$.



Define for all $x \in \Z$,
 \begin{align}            
\label{equa:LocalTime}
&\xi(n,x):=
\sum_{t=0}^{n}\1\{X_t=x\},\\
 \label{equa:LeftStep}  
&\xi^-(n,x):=\sum_{t=0}^{n-1}\1\{X_t=x;\
X_{t+1}=x-1\},                         \\
\label{equa:RightStep}
&\xi^+(n,x):=\sum_{t=0}^{n-1}\1\{X_t=x;\
X_{t+1}=x+1\}, 
\end{align}
which are respectively  the local time of the RWRE  in $x$ and the number
of left steps (resp. right steps) from site $x$ at time $n$. Note that
\[ 
  \xi(n-1,x)=\xi^+(n,x) + \xi^-(n,x) \quad \mbox{and} \quad
|\xi^-(n,x+1)-\xi^+(n,x)|\leq 1.
\]
Denote by $R_n$ the range of the walk, 
\begin{equation}   \label{defi:range}   
R_n=  \big  \vert \RR_n\big\vert  ,  \qquad  \RR_n  = \big\{x>0  \,:\,
\xi(n-1,x) \geq 1 \big\},
\end{equation}
with $|E|$  the cardinality of  the set $E$. 
It is straightforward to  compute the quenched and annealed likelihood
of a  nearest neighbour path  $X_{[0,n]}$ of length $n$  starting from
$0$
\[ 
     P_{\w}(X_{[0,n]})=\prod_{x \in \RR_n } \w^{\xi^+(n,x)}_x
(1-\w_x)_{{\phantom{x}}}^{\xi^-(n,x)},
\]
and
\[ \PP^{\theta}(X_{[0,n]}) = \prod_{x  \in \RR_n } \int_0^1
a^{\xi^+(n,x) }(1-a)^{\xi^-(n,x)}\dd \eta_{\theta}(a).
\]
Then, the annealed log-likelihood function $\t \mapsto\ell_n(\t)$ is 
 defined for all $\theta=(\ab, \pb) \in \Theta$ as
\begin{equation} \label{eq:l} 
  \ell_n(\theta) = \log
  \PP^{\theta}(X_{[0,n]})=\sum_{x  \in \RR_n } \log \big[\sum_{i=1}^d
  a_i^{\xi^+(n,x)}(1-a_i)_{\phantom{i}}^{\xi^-(n,x)}p_i\big].
\end{equation}
\begin{defi} \label{defi:MLE}
A Maximum Likelihood Estimator (MLE) $ \htet$ of $ \ts $
is defined as a measurable choice
\begin{equation}\label{eq:estimator} 
\htet \in \argmax_{\theta\in\Theta}\ell_n(\theta).
\end{equation}
We denote $\widehat \ab_n$ and $\widehat \pb_n$ the first and second
projection of $\htet$. 
\end{defi}
Due to an analysis of the log-likelihood function provided
by    Lemma~\ref{lemm:expansion}    below,     we    are    lead    to
Definition~\ref{defi:MPLE}  of  a   Maximum  Pseudo-Likelihood
Estimator (MPLE).  To do so, some additional notations are required. 
Let
\[ 
  \nu(n,x)=     \frac{\xi(n-1,x)}{n},     \quad     \nu^+(n,x)     =
  \frac{\xi^+(n,x)}{n},   \quad  \mbox{and}  \quad   \nu^-(n,x)  =
  \frac{\xi^-(n,x)}{n}. 
\]
Let  $\Theta_\ab  =  \big\{\ab  \,   :  \,  \exists  \pb,  \,  (\ab, \pb)
\in\Theta\big\}$ be the first projection of the parameter space
$\Theta$.  Introduce for $\ab$ in $\Theta_\ab$  and $\pi^+$, $\pi^-:
\Z 
\to \R_+$ with $\sum_x [\pi^+(x)+\pi^-(x)]=1$
\begin{equation} \label{eq:L} 
  L(\ab,   \pi^+,\pi^-)  =   \sum_{x   \in  \Z}   \max_i
\big\{ {\pi^+(x)} \log a_i + {\pi^-(x)}\log (1-a_i) \big\},  
\end{equation}
and 
\begin{equation} \label{eq:Ln} 
  L_n(\ab) = L\big( \ab,\nu^+(n,\cdot),\nu^-(n,\cdot) \big),  
\end{equation}
where   we   set   $\nu^+(n,x)=\nu^-(n,x)=0$  for   any   non-positive
integer~$x$. 
Define  the increasing  sequence $\betab=(\beta_i)_{0\leq  i  \leq d}$
depending on $\ab$ as 
\begin{equation}  \label{defi:gamma} 
\beta_0=-\infty,  \quad 
\beta_i  =  \log  \Big(\frac{1-a_{i}}  {1-a_{i+1}}  \Big)  \Big/  \log
\Big(\frac{a_{i+1}} 
{a_{i}} \Big),  \mbox{ for any  $i \leq d-1$,} \quad  \mbox{and} \quad
\beta_d=\infty.
\end{equation}
For any $ x$ in $\RR_n$, define the random integer $\ii(\ab,n,x)$ as 
\begin{equation} 
  \label{defi:integerI} 
  {\ii(\ab,n,x)} = i, \qquad \mbox{if \quad $\xi^+(n,x) / \xi^-(n,x) \in
    (\beta_{i-1},\beta_{i}]$}, 
\end{equation}
which is designed to satisfy
\begin{equation}\label{defi:setI} \ii(\ab,n,x) \in 
\argmax \Big\{\xi^+(n,x)  \log a_i + \xi^-(n,x) \log  (1-a_i) \,:\, {i
\leq d} \Big\},
\end{equation}
when $x$ is in $\RR_n$. Finally, introduce $K_n(\t)$ as 
\begin{equation}
  \label{equa:Kn}
  K_n(\t)=  \frac{1} {R_n}  \sum_{x \in  \RR_n}  \log p_{\ii(\ab,n,x)}
   =   \sum_{i=1}^d \frac{R_n(\ab,i)} {R_n} \log p_i, 
\end{equation}
where $R_n(\ab,i)$ is the random integer defined by
\begin{equation}
\label{equa:Rn_i}   
  R_n(\ab,i)= \sum_{x\in\Rn } \1 \big\{ \ii(\ab,n,x)=i \big\} =
\sum_{x\in\Rn  } \1  \Big\{ \beta_{i-1}  <  \frac{\xi^+(n,x)}{\xi^-(n,x) }
\leq \beta_{i} \Big\}. 
\end{equation}
\begin{lemm} \label{lemm:expansion} 
We have
\begin{equation}
  \label{equa:expansion}
  \ell_n(\theta) = 
  n \cdot L_n(\ab) 
  + R_n \cdot K_n(\t) 
  +r_n(\t),
\end{equation}
where
\begin{equation}
  \label{equa:reste}
  r_n(\t) = 
  \sum_{x \in \RR_n} \log \left( 1 + 
    \sum_{i    \neq   \ii(\ab,n,x)    }   \frac{p_i}{p_{\ii(\ab,n,x)}}
    U_i(\ab,n,x)^{\xi(n-1,x)}   \right),
\end{equation}
with
\begin{equation}
  \label{equa:reste2}
  U_i(\ab,n,x)=\Big( \frac{a_i}{a_{\ii(\ab,n,x)}} \Big)^{\xi^+(n,x)/\xi(n-1,x)}
    \Big( \frac{1-a_i}{1-a_{\ii(\ab,n,x)}} \Big)^{\xi^-(n,x)/\xi(n-1,x)}.
\end{equation}
Furthermore, for any $\t \in \Theta$,
\begin{equation}
  \label{equa:reste3}
  \frac 1  n \Big(R_n  \cdot K_n(\t) +  r_n(\t) \Big)  \xrightarrow[ n
  \to \infty]{} 0, \quad \mbox{$\PPs$-a.s.,}
\end{equation}
as well as
\begin{equation}
  \label{equa:reste4}
  \frac{r_n(\t)}{\log^2 n} \xrightarrow[ n
  \to \infty]{} 0,\quad  \mbox{in $\PPs$-probability}.
\end{equation}
\end{lemm}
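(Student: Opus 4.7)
The plan is in three steps, matching the three claims of the lemma.

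\textbf{Step 1 (the identity \eqref{equa:expansion}).} For each $x\in\RR_n$, I would factor the summand indexed by $i=\ii(\ab,n,x)$ out of the inner sum of \eqref{eq:l}. By \eqref{defi:setI} it is the index that maximizes $\xi^+(n,x)\log a_i+\xi^-(n,x)\log(1-a_i)$, and the identity $(a_i/a_{\ii})^{\xi^+(n,x)}((1-a_i)/(1-a_{\ii}))^{\xi^-(n,x)}=U_i(\ab,n,x)^{\xi(n-1,x)}$ brings the residual factor into the form of \eqref{equa:reste2}. Taking the logarithm and summing over $\RR_n$ (equivalently over $\Z$, since summands vanish off $\RR_n$), the maxima combine to $n\,L_n(\ab)$ by \eqref{eq:L}--\eqref{eq:Ln}, the $\log p_{\ii(\ab,n,x)}$ terms to $R_n\,K_n(\t)$ by \eqref{equa:Kn}, and the leftover logarithm is precisely $r_n(\t)$.

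\textbf{Step 2 (the a.s.\ statement \eqref{equa:reste3}).} From \eqref{equa:eps0} one has $p_i\geq\eps_0$, so $|K_n(\t)|\le|\log\eps_0|$. By maximality of $\ii$, each $U_i\le 1$, while $p_i/p_{\ii}\le 1/\eps_0$; hence every summand of $r_n$ is non-negative and at most $\log(1+(d-1)/\eps_0)$. Consequently $|R_n K_n(\t)+r_n(\t)|\le C\,R_n$ with $C=C(d,\eps_0)$. Under Assumption~\ref{as:recvar} the walk is recurrent, so $|X_n|=o(n)$ $\PPs$-a.s.\ (in fact $O(\log^2 n)$ by Sinai's localization), and therefore $R_n\le 2\max_{t\le n}|X_t|+1=o(n)$ $\PPs$-a.s.

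\textbf{Step 3 (the in-probability statement \eqref{equa:reste4}).} This will be the main obstacle, because Step~2 only gives $r_n=O(R_n)=O(\log^2 n)$ whereas we need $o(\log^2 n)$. The plan is to refine the bound by splitting $\RR_n$ at a slowly growing threshold $m_n\to\infty$. On the deep set $\{x:\xi(n-1,x)\ge m_n\}$ I would apply $\log(1+u)\le u$ together with a uniform decay $U_i(\ab,n,x)\le u_0<1$ (for $i\ne\ii$), valid whenever the empirical ratio $\xi^+(n,x)/\xi^-(n,x)$ stays at positive distance from the finite set of thresholds $\{\beta_j\}$, the quantitative separation coming from \eqref{equa:eps0}; the deep contribution is then $O(R_n\,u_0^{m_n})=o(1)$. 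On the shallow set each summand is $O(1)$, and the contribution is bounded by $C\cdot|\{x\in\RR_n:\xi(n-1,x)<m_n\}|$; showing that this cardinality is $o(\log^2 n)$ in $\PPs$-probability is the central technical effort. I expect this to follow from the convergence of the empirical local-time profile to the Sinai/GPS limit, which concentrates the visits on the main valley where interior sites accumulate a diverging number of visits, leaving only a negligible fraction of $\RR_n$ briefly visited. The small extra set where $\xi^+/\xi^-$ lies close to some $\beta_j$ will be handled via a small-ball estimate on the limiting distribution together with Assumption~\ref{as:ell}, which guarantees that the thresholds $\beta_j$ are not atoms of the limiting law of the empirical ratios.
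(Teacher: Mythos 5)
Your Step 1 is the correct (and essentially the paper's implicit) derivation of the identity, and your Step 2 is in substance the paper's proof of \eqref{equa:reste3}: the paper sandwiches $\ell_n(\t)-n L_n(\ab)$ between $\log\eps_0\cdot\max_{0\le t\le n}X_t$ and $0$ and uses the law of large numbers $X_n/n\to 0$ $\PPs$-a.s.\ (this, rather than bare recurrence, is the right justification that $\max_{t\le n}X_t=o(n)$). The genuine gap is in Step 3, precisely at the point you try to wave through with a ``small-ball'' argument. For a deeply visited site $x$, the ratio $\xi^+(n,x)/\xi^-(n,x)$ concentrates at $\omega_x/(1-\omega_x)\in\{a^\star_j/(1-a^\star_j)\}$, so the limiting ``law of the empirical ratios'' is purely atomic; Assumption~\ref{as:ell} does not prevent a threshold $\beta_i(\ab)$ from coinciding with one of these atoms for some $\ab$ far from $\abs$ (take $d=2$ and $\ab$ with $\KL(a_1^\star|a_1)=\KL(a_1^\star|a_2)$). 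For such a $\t$ a positive fraction of the deep sites has no uniform gap between the maximal term and its competitor, there is no $u_0<1$, and the whole atom (not a small ball) sits on the bad set, so your bound does not give $o(\log^2 n)$. The paper avoids this entirely: it proves the $o(\log^2 n)$ estimate only for $\ab$ in the neighborhood $\VV(\eps'')$ of $\abs$, where \eqref{equa:choix_eps} together with the event $A_n^\delta(\eps'')$ of \eqref{equa:eventA} forces $\ii(\ab,n,x)=\ii(\abs,n,x)$ and $\omega_x=a^\star_{\ii(\ab,n,x)}$, whence the uniform bound \eqref{equa:controlU}, $\log U_i\le -c_0/2$; this restricted version is all that is used later, since it is applied to $\baa$ and $\widehat\ab_n$, which are consistent.

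The second half of the gap is the count of rarely visited sites, which you correctly identify as the central effort but then defer to the GPS theorem. It does not follow from it: sites with negligible local time are invisible in the $\ell^1$-limit of the normalized profile yet could a priori be numerous. The paper proves the needed facts by hand, replacing your local-time threshold by the potential-defined set $\Rn^\delta$: Lemma~\ref{lemm:Z} shows $|\Rn\setminus\Rn^\delta|/\log^2 n$ converges in law to a variable $Z(\delta)$ vanishing as $\delta\to0$, via the invariance principle for the potential $V$; Lemmas~\ref{lemm:xi_omega} and~\ref{lemm:excursion} show, by excursion counting, hitting-probability bounds and Hoeffding/Chernoff estimates, that with probability tending to one every site of $\Rn^\delta$ has local time at least $n^{\delta/2}$ and carries an empirical ratio within $\eps''$ of $1/\rho_x$. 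These lemmas are the real content behind \eqref{equa:reste4}; as written, your proposal assumes them (and, on the threshold issue, rests on a claim that is false uniformly over $\Theta$), so it does not yet constitute a proof.
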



Hence, the first term in the RHS of \eqref{equa:expansion} is of order
$n$ and depends only on $\ab$, whereas the second term is of order 
$\log^2 n$ and depends on $\ab$ and $\pb$.  The proofs 
of~\eqref{equa:reste3} and~\eqref{equa:reste4} are respectively 
provided  in  Sections~\ref{sect:L} and~\ref{sect:asymptotique2}.   As
claimed  above, in view  of Lemma~\ref{lemm:expansion},  the following
definition appears natural for an estimator of $\ts$. 

\begin{defi} \label{defi:MPLE}
  A   Maximum   Pseudo-Likelihood  Estimator   (MPLE)   $
\btet=(\baa,\bpp)$    of    $\theta$    is   a    measurable    choice
of
\begin{equation}\label{eq:estimatorPMLE} 
\left\{
\begin{array}{ccl}  \baa  &   \in  &  \displaystyle  \argmax_{\ab  \in
    \Theta_{\ab}} \ L_n(\ab), \\
\bpp  & \in &  \displaystyle \argmax_{\pb} \ 
K_n(\baa,\pb).
\end{array} \right.
\end{equation}
\end{defi}
In the beginning of Section~\ref{sect:probas}, we prove that 
\begin{equation} \label{defi:tpb} 
\bpp= \left(\frac{R_n(\baa,i)}{R_n} \,:\,
i=1,\ldots,d\right).
\end{equation}
Now, we can state our consistency results. 




\begin{theo} \label{theo:cvsupport} 
 Let Assumptions~\ref{as:recvar} and~\ref{as:ell} hold. 
Both the  ML estimator $\widehat  \ab_n$ and the MPL  estimator $\baa$
converge in $\PPs$-probability to the true parameter value~$\abs$. 
\end{theo}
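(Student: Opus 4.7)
The strategy is to establish consistency of the MPLE $\baa$ first, and then deduce that of the MLE $\widehat \ab_n$ from Lemma~\ref{lemm:expansion}. Indeed, the lemma implies $\ell_n(\t)/n = L_n(\ab) + O(\log^2 n / n)$ uniformly in $\t$, since $R_n = O(\log^2 n)$ for recurrent RWRE (Sinai's localization), $K_n(\t)$ is bounded by $|\log \eps_0|$ via \eqref{equa:eps0}, and $r_n(\t)$ admits a similar uniform bound from its explicit expression because $U_i(\ab,n,x)\le 1$ when $i$ maximises in \eqref{defi:setI}. Consequently the MLE inequality $\ell_n(\widehat\t_n)\ge\ell_n(\btet)$ yields $L_n(\widehat\ab_n)\ge L_n(\baa)-o(1)$, so $\widehat\ab_n$ is a near-maximiser of $L_n$, and any consistency argument for $\baa$ extends to it.

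For the MPLE I would follow a standard M-estimator argument adapted to the recurrent RWRE setting. The key probabilistic input is the weak convergence of the empirical occupation measures $\nu^\pm(n,\cdot)$ of the walk to the stationary distribution of Sinai's random walk in Golosov's infinite valley, as in \cite{GPS}. Since the functional $L(\ab,\pi^+,\pi^-)$ in \eqref{eq:L} is translation-invariant and continuous in its measure arguments, this gives, for each $\ab\in\Theta_\ab$, a weak limit $L_n(\ab)\to L_\infty(\ab)$, where $L_\infty$ is in general a genuinely random variable. Identification of the maximiser rests on Gibbs' inequality
\[
a\log a+(1-a)\log(1-a)\ \ge\ a\log b+(1-a)\log(1-b),\qquad a,b\in(0,1),
\]
with equality iff $a=b$. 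At a site $x$ visited many times $\xi^+(n,x)/\xi(n-1,x)\to\w_x$, so when $\ab=\abs$ the maximum in \eqref{eq:L} is attained at the index $j$ with $\w_x=a^\star_j$; for $\ab\ne\abs$, Assumption~\ref{as:ell} and the uniform separation \eqref{equa:eps0} produce a strict loss at sufficiently many sites, yielding $L_\infty(\abs)>L_\infty(\ab)$ almost surely.

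Uniform convergence over the compact set $\Theta_\ab$ is obtained from equicontinuity: each summand in \eqref{eq:L} is a maximum of $d$ affine functions of $\log a_i$ and $\log(1-a_i)$ weighted by empirical masses that sum to at most one, so the family $\{L_n\}$ is Lipschitz in $\ab$ uniformly in $n$ with constant controlled by \eqref{equa:eps0}. Combined with a continuous-mapping-type argmax theorem applied to the random function $L_n$ and its a.s.\ uniquely maximised limit $L_\infty$, this yields $\baa\to\abs$ in $\PPs$-probability, and then also $\widehat\ab_n\to\abs$ via the reduction of the first paragraph.

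The hard part will be the probabilistic input, not the M-estimator scaffolding. Two subtleties must be addressed. First, one needs a non-degeneracy statement for the limit occupation measure: enough sites must be visited that all $d$ atoms of $\eta_{\ts}$ appear among $\{\w_x : x\in\RR_n\}$, so that the Gibbs-inequality comparison is strict on a full-measure event rather than merely in expectation, a point where Assumption~\ref{as:recvar} and the electrical-network picture of \cite{DoSn} enter. Second, although $L_\infty$ is genuinely random and $L_n$ does not converge in probability to a deterministic function, its argmax is the deterministic value $\abs$, and it is precisely this deterministic-argmax feature that upgrades the distributional convergence of $L_n$ to the probability convergence of $\baa$. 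Once these two inputs are secured, tightness, equicontinuity, and the MLE/MPLE reduction are routine given \eqref{equa:eps0} and Lemma~\ref{lemm:expansion}.
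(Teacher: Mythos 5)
Your proposal takes essentially the same route as the paper: the uniform sandwich $0\ \geq\ \ell_n(\t)-n\,L_n(\ab)\ \geq\ \log\eps_0\cdot\max_{0\leq t\leq n}X_t$ (the paper's \eqref{equa:mino_elln}) reduces the MLE to a near-maximiser of $L_n$, and consistency of $\baa$ is obtained from the GPS weak convergence of the occupation measures, the Lipschitz continuity of $L$ in both arguments, and the identification $L_\infty(\abs)>L_\infty(\ab)$ through the Kullback--Leibler comparison together with the a.s.\ positivity of the infinite-valley mass on each atom $a_j^\star$, the paper carrying out your ``argmax continuous mapping'' step by hand via a finite covering of $\Theta_\ab\setminus{\mathcal B}(\abs,\eps)$ and the quantitative gap \eqref{eq:L>}. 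The two points you flag as remaining work --- non-degeneracy of the limit measure and the upgrade from convergence in law to convergence in probability via the deterministic argmax --- are exactly the ones the paper settles, the first through the conditioned-walk description of $\widetilde V$.
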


\begin{theo} \label{theo:cvprobas} 
 Let   Assumptions~\ref{as:recvar}
and~\ref{as:ell} hold. Both the  ML estimator $\widehat \pb_n$ and the
MPL  estimator  $\bpp$ converges  in  $\PPs$-probability  to the  true
parameter value~$\pbs$.  
\end{theo}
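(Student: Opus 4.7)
\textbf{Plan for Theorem \ref{theo:cvprobas}.}
I treat the MPLE first, since the MLE part piggybacks on the same key convergence. Fix $\ab$ and note that the weights $q_i := R_n(\ab,i)/R_n$ form a probability vector (they sum to one because the events in \eqref{equa:Rn_i} partition $\Rn$). Then
\[
K_n(\ab,\pb) = \sum_{i=1}^{d} q_i \log p_i
\]
is strictly concave in $\pb$ on the simplex $\Delta_d$, and by Gibbs' inequality is maximised uniquely at $\pb = \qb$. This gives \eqref{defi:tpb}, so $\bpp_i = R_n(\baa,i)/R_n$. By Theorem \ref{theo:cvsupport}, $\baa \to \abs$ in $\PPs$-probability. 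Thus, up to an easy continuity argument exploiting that the thresholds $\beta_i(\ab)$ in \eqref{defi:gamma} depend continuously on $\ab$, the MPLE part is reduced to the following key convergence: for each $i \in \{1,\ldots,d\}$,
\[
\frac{R_n(\abs,i)}{R_n} \xrightarrow[n\to\infty]{\PPs\text{-prob.}} p_i^\star .
\]

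For this key convergence, I exploit that the recurrent reflected walk satisfies $\Rn = \{1,\ldots,M_n\}$ with $M_n = \max_{t \le n} X_t \to \infty$ $\PPs$-almost surely. Since $\w$ is i.i.d.\ under $\Ps$, the ordinary LLN yields $(1/R_n)\sum_{x \in \Rn} \1\{\omega_x = a_i^\star\} \to p_i^\star$ $\PPs$-a.s. Hence it suffices to show that the indicator $\1\{\ii(\abs,n,x) = i\}$ matches $\1\{\omega_x = a_i^\star\}$ on all but a vanishing fraction of $\Rn$. The matching is forced by a quenched local LLN at sites with large local time: if $\xi(n-1,x) \to \infty$ and $\omega_x = a_j^\star$, then $\xi^+(n,x)/\xi(n-1,x) \to a_j^\star$, so $\xi^+(n,x)/\xi^-(n,x) \to a_j^\star/(1-a_j^\star)$, which by direct inspection of \eqref{defi:gamma} lies strictly inside $(\beta_{j-1}^\star,\beta_j^\star)$, giving $\ii(\abs,n,x) = j$ eventually. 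What remains is to bound the number of sites in $\Rn$ with small local time. Here I invoke Sinai-regime asymptotics: $R_n \asymp \log^2 n$ so the average local time is $n/R_n \to \infty$, and the limit description of \cite{GPS} in terms of Golosov's infinite valley yields a uniform tail control showing that, for any fixed $T$, $|\{x \in \Rn : \xi(n-1,x) \le T\}| = o(R_n)$ in $\PPs$-probability.

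For the MLE, Theorem \ref{theo:cvsupport} provides $\widehat\ab_n \to \abs$ in $\PPs$-probability. Applying the expansion \eqref{equa:expansion} to $\widehat\t_n$ and to $(\widehat\ab_n,\pbs)$ and using $\ell_n(\widehat\t_n) \ge \ell_n(\widehat\ab_n,\pbs)$, the $nL_n(\widehat\ab_n)$ terms cancel and I obtain
\[
R_n\big[K_n(\widehat\ab_n,\widehat\pb_n) - K_n(\widehat\ab_n,\pbs)\big] \;\ge\; r_n(\widehat\ab_n,\pbs) - r_n(\widehat\ab_n,\widehat\pb_n).
\]
Dividing by $R_n \asymp \log^2 n$ and invoking \eqref{equa:reste4} in a mild uniform-in-$\pb$ form (straightforward from \eqref{equa:reste}--\eqref{equa:reste2} and the uniform lower bound $p_i \ge \eps_0$ from \eqref{equa:eps0}), the right-hand side is $o_\P(1)$. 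The left-hand side rewrites as $\sum_i q_i(\widehat\ab_n)\big[\log \widehat p_n^{(i)} - \log p_i^\star\big]$ with $q_i(\widehat\ab_n) \to p_i^\star$ by the key convergence above. Tightness of $\widehat\pb_n \in \Delta_d$ (compactness) combined with strict Gibbs' inequality then forces every subsequential limit to coincide with $\pbs$, concluding $\widehat\pb_n \to \pbs$ in $\PPs$-probability.

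The main obstacle is the quantitative step in paragraph two: ruling out that a positive fraction of $\Rn$ carries small local time. This is where the fine Sinai-regime/Golosov-valley description (via \cite{GPS}) is essential and where most of the technical work will lie; the rest of the argument is comparatively structural.
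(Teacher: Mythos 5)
Your overall architecture matches the paper's: you identify $\bpp$ via Gibbs' inequality exactly as in \eqref{defi:tpb}, reduce the MPLE statement to showing that the classification index $\ii(\cdot,n,x)$ agrees with the true environment value outside a negligible fraction of the range, finish with the law of large numbers for the i.i.d.\ environment along the range, and, for the MLE, compare likelihoods through the expansion of Lemma~\ref{lemm:expansion} to extract a vanishing Kullback--Leibler divergence. This is the paper's route (there the comparison point is $\btet$ rather than $(\widehat\ab_n,\pbs)$; your variant makes the $L_n$ terms cancel exactly, but note that $(\widehat\ab_n,\pbs)$ need not belong to $\Theta$, since Assumption~\ref{as:recvar} couples $\ab$ and $\pb$ through the recurrence constraint, so $\ell_n(\htet)\ge \ell_n(\widehat\ab_n,\pbs)$ does not follow from \eqref{eq:estimator} without an extra word on the structure of $\Theta$).

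The genuine gap is the step you yourself flag, and your proposed source for it is not adequate: the claim that, for fixed $T$, $|\{x\in\Rn : \xi(n-1,x)\le T\}|=o(R_n)$ in probability, plus sufficiently uniform concentration at the remaining sites, does not follow from the GPS theorem as you invoke it. The $\ell^1$ convergence of the normalized local times $\{\nu_n(x)\}$ controls sites carrying a positive fraction of the time $n$; it says nothing about how many sites carry $O(1)$ local time, and a fixed threshold $T$ only yields a per-site misclassification probability of order $\ee^{-cT}$, so you need a double limit together with a genuinely quantitative estimate on barely-visited sites. This is precisely what the paper constructs by hand: it replaces the fixed-$T$ truncation by the potential-defined set $\Rn^\delta=\GG_n^\delta\cup\DD_n^\delta$ of \eqref{equa:G_n}--\eqref{equa:R_n}; proves via an invariance principle for $V$ that $|\Rn\setminus\Rn^\delta|/\log^2 n$ converges in law to a variable $Z(\delta)$ vanishing as $\delta\to0$ (Lemma~\ref{lemm:Z}); shows by excursion counting, a Chernov bound and Golosov's hitting estimate that every site of $\Rn^\delta$ has local time at least $n^{\delta/2}$ with probability tending to one (Lemma~\ref{lemm:excursion}); and then applies Hoeffding's inequality conditionally on the local times to obtain the event $A_n^\delta(\eps')$ of \eqref{equa:eventA}, with the margin \eqref{equa:choix_eps} guaranteeing that the classification matches $\omega_x$ and is insensitive to replacing $\abs$ by $\baa$ or $\widehat\ab_n$ (Lemma~\ref{lemm:xi_omega}). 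So the plan is the right one, but the ``uniform tail control'' you attribute to \cite{GPS} is exactly the content of these three lemmas and still has to be proved; the same lemmas are also what makes your uniform-in-$\pb$ use of \eqref{equa:reste4} rigorous in the MLE part, since the proof of \eqref{equa:reste4} itself runs through $A_n^\delta(\eps'')$ and $E_n^\delta$.
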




We expect the speed of convergence for estimating $\pb$ is much slower
than the one for estimating $\ab$. This is supported by our simulation
experiment  provided  in Section~\ref{sect:simus}  but  we leave  this
question for further research.  Section~\ref{sect:support} is devoted to
the proof of the consistency of $\baa$ and $\widehat \ab_n$ whereas 
Section~\ref{sect:probas} is  devoted to the proof  of the consistency
of $\widehat \pb_n$ and $\bpp$.

\textbf{Concluding remarks.} 
Let us start  to describe a naive estimation  based on recurrence. For
all $x$ in $\RR_n$, we can estimate the environment at this point by 
\[
\hat \omega_x^{(n)} = \frac{\xi^+(n,x)}{\xi(n-1,x)}.
\]
By recurrence, $\hat \omega_x^{(n)}$ converges to $\omega_x$,
$\PPs$-a.s. With some extra work, it can be shown that, 
\begin{equation*}
  \frac{1}{R_n}  \sum_{x  \in  \RR_n}  \delta_{\hat  \omega_x^{(n)}  }
  \xrightarrow[n \to \infty]{} \eta_\ts 
  \quad \mbox{$\PPs$-a.s.}, 
\end{equation*}
 where $\RR_{n}$ is defined by~\eqref{defi:range}, and this leads 
to estimators of the parameters.  This empirical estimator is then 
consistent. However, it gives equal weight to all visited sites $x$ in
$\Rn$,  without any notice  of the  number of  visits there,  which is
certainly far from optimal. This is essentially how \cite{AdEn} devise
their  estimators,  and the  simulations  in Section  \ref{sect:simus}
indicate  they perform  poorly  at  some extend.  On  the other  hand,
\cite{andreo_sinai}  proposes estimators  based on  local time  of the
walk, which  indeed take  care of  this flaw, but  which are  adhoc in
essence and difficult  to use in an optimal  manner.  In contrast, our
estimate  relies on  first principles - maximum likelihood -  and uses  the  full information
gathered by the walk all through.  


\section{Proof  of consistency  for the MLE and MPLE of  the ordered
  support} \label{sect:support}

In the  present section, we  first recall the weak  convergence result
established by~\citeauthor{GPS} for the empirical distributions of the
RWRE.  Then, we  identify the limit of our criterion  as a functional of
$\ab$ only, and provide some information on it.  Using its regularity
properties, we can adapt the  proof of consistency for M-estimators to
our context.  




\subsection{Potential and infinite valley}
\label{sect:potential}

The environment $\w$ in which the random walk evolves is visualized as
a potential landscape $V$ where $V = \{ V(x) \, : \,  x \in \Z \}$ is
defined by 
\begin{equation} \label{equa:Pot} 
V(x) = \left\{
    \begin{array}{ll} \sum_{y=1}^x \log \rho_y & \mbox{if $x>0$,} \\ 0
& \mbox{if $x=0$,} \\ - \sum_{y=x+1}^0 \log \rho_y& \mbox{if $x<0$,}
    \end{array} \right.
\end{equation}
with $\rho_y = (1-\w_y)/\w_y$. An  example of a realization of $V$ can
be seen on Figure~\ref{figu:potential}.

Setting $C(x,x+1)=\exp[-V(x)]$,
for any  integer $x$, the  Markov chain $X$  is an
electric network in the sense of \cite{DoSn} or \cite{LePeWi}, where
$C(x,x+1)$ is the conductance  of the (unoriented) bond $(x,x+1)$.  In
particular, the measure $\mu$ defined as 
\begin{equation} \label{equa:Mu} 
\mu(x) = \exp[-V(x-1)]+\exp[-V(x)], \quad x \in \Z,
\end{equation}
is a reversible and invariant measure for the Markov chain $X$.

Now, define the right border $c_n$ of the ``valley'' with depth $\log n +
  (\log n)^{1/2}$ as
\begin{equation} \label{equa:c_n}
  c_n = \min \big\{x \geq 0 \,  : \, V(x) - \min_{0\leq y \leq x} V(y)
  \geq \log n + (\log n)^{1/2} \big\}, 
\end{equation}
and the bottom $b_n$ of the ``valley''  as
\[ 
  b_n = \min  \big\{x \geq 0 \, :  \, V(x) = \min_{0 \leq  y \leq c_n}
  V(y)\big\}. 
\]
On Figure~\ref{figu:potential}, one can see a representation of $b_n$ and
$c_n$. We are  interested in the shape of  the ``valley'' $(0,b_n,c_n)$
when  $n$ tends  to infinity  and we  recall the  concept  of infinite
valley introduced by \cite{Golosov}.

\begin{figure}
\begin{center}
\begin{tikzpicture}[xscale=0.1,yscale=0.5]

\draw[->] (0,-8) -- (0,4);
\draw (-0.5,4) node[anchor=east] {$V(x)$}; 

\draw[->] (0,0) -- (110,0);
\draw (110,-0.5) node[anchor=north] {$x$}; 

\draw[dotted] (57,-7.625681e+00) -- (57,0);
\draw (57,0) node[anchor=south] {$b_n$}; 

\draw[dotted] (99, 2.541894e+00) -- (99,0);
\draw (99,0) node[anchor=north] {$c_n$};
 
\draw[dotted] (56,-7.625681e+00 ) -- (106,-7.625681e+00);
\draw[dotted]  (86,-7.625681e+00  + 9.536  )  -- (106,-7.625681e+00  +
9.536 );
\draw[<->,dotted] (105,-7.625681e+00 ) -- (105,-7.625681e+00 + 9.536);

\node[fill=white] at (102,-4) {$\log n + \sqrt{\log n}$};

\draw[-] (0,0.000000e+00)  -- (1,8.472979e-01) --  (2,1.694596e+00) --
(3,  8.472979e-01)  --   (4,  0)  --  (5,-8.472979e-01)  --
(6,0)   --  (7,   8.472979e-01)   --  (8,0)   --
(9,-8.472979e-01)   --  (10,0)  --   (11,-8.472979e-01)  --
(12,-1.694596e+00)  --  (13,-8.472979e-01)  --  (14,-1.694596e+00)  --
(15,-8.472979e-01)  --  (16,-1.694596e+00)  --  (17,-2.541894e+00)  --
(18,-3.389191e+00) -- (19,-4.236489e+00) -- (20,-5.083787e+00) -- 
(21,-5.931085e+00)  --  (22,-5.083787e+00)  --  (23,-4.236489e+00)  --
(24,-3.389191e+00)  --   (25,-2.541894e+00)  --  (26,-3.389191e+00)  --
(27,-2.541894e+00)  --  (28,-1.694596e+00)  --  (29,-2.541894e+00)  --
(30,-3.389191e+00)  --  (31,-2.541894e+00)  --  (32,-3.389191e+00)  --
(33,-2.541894e+00)  --  (34,-1.694596e+00)  --  (35,-2.541894e+00)  --
(36,-3.389191e+00)  --  (37,-2.541894e+00)  --  (38,-3.389191e+00)  --
(39,-2.541894e+00) -- (40,-1.694596e+00) -- (41,-2.541894e+00) -- 
(42,-1.694596e+00)  --  (43,-2.541894e+00)  --  (44,-3.389191e+00)  --
(45,-4.236489e+00)  --  (46,-5.083787e+00)  --  (47,-4.236489e+00)  --
(48,-3.389191e+00)  --  (49,-4.236489e+00)  --  (50,-3.389191e+00)  --
(51,-4.236489e+00)  --  (52,-3.389191e+00)  --  (53,-4.236489e+00)  --
(54,-5.083787e+00)  --  (55,-5.931085e+00)  --  (56,-6.778383e+00)  --
(57,-7.625681e+00)  --  (58,-6.778383e+00)  --  (59,-5.931085e+00)  --
(60,-6.778383e+00)  --  (61,-7.625681e+00)  --  (62,-6.778383e+00)  --
(63,-5.931085e+00)  --  (64,-6.778383e+00)  --  (65,-7.625681e+00)  --
(66,-6.778383e+00)  --  (67,-7.625681e+00)  --  (68,-6.778383e+00)  --
(69,-7.625681e+00)  --  (70,-6.778383e+00)  --  (71,-5.931085e+00)  --
(72,-5.083787e+00)  --  (73,-4.236489e+00)  --  (74,-5.083787e+00)  --
(75,-4.236489e+00)  --  (76,-5.083787e+00)  --  (77,-4.236489e+00)  --
(78,-3.389191e+00)  --  (79,-4.236489e+00)  --  (80,-5.083787e+00)  --
(81,-4.236489e+00)  --  (82,-5.083787e+00)  --  (83,-4.236489e+00)  --
(84,-3.389191e+00)  --  (85,-2.541894e+00)  --  (86,-3.389191e+00)  --
(87,-2.541894e+00)  --  (88,-1.694596e+00)  --  (89,-8.472979e-01)  --
(90,-1.694596e+00)  --  (91,-2.541894e+00)  --  (92,-1.694596e+00)  --
(93,-8.472979e-01)  --  (94,0)   --  (95,8.472979e-01)  --  (96,0)  --
(97,8.472979e-01)  --  (98,1.694596e+00)   --  (99,  2.541894e+00)  --
(100,3.389191e+00);

\end{tikzpicture}
\end{center}
\caption{Example  of  potential  derived  from  a  random  environment
 distributed  as in Example~\ref{ex:Temkin}
  with parameter $a=0.3$. Simulation with $n=1000$.}
\label{figu:potential}
\end{figure}
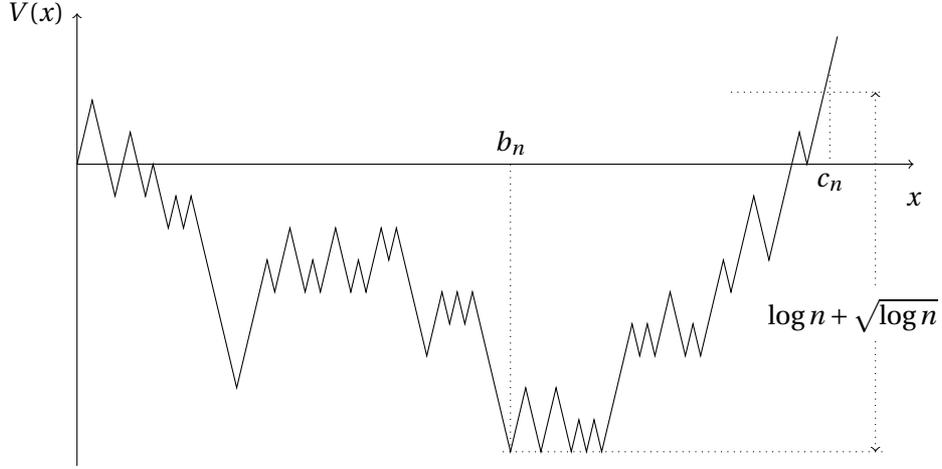

Let $\widetilde  V =  \{ \widetilde V(x)  \, :  \, x \in  \Z \}$  be a
collection of random variables  distributed as $V$ conditioned to stay
positive   for   any   negative   $x$,   and   non-negative   for   any
non negative~$x$.  For simplicity,  we assume that without loss of
generality, $\widetilde V$ is also realised under $\Ps$.  
For each realization of $\widetilde V$, consider the
corresponding  Markov chain on  $\Z$, which  is an  electrical network
with conductances $\widetilde  C(x,x+1) = \exp[-\widetilde V(x)]$.  As
usual, the measure  $\widetilde \mu$ defined as $  \widetilde \mu(x) =
\widetilde  C(x-1,x)  +  \widetilde  C(x,x+1)$,  $ x  \in  \Z$,  is  a
(reversible)  and   invariant  measure  for  the   Markov  chain  $X$.
Furthermore, the measure  $\widetilde \mu$ can be normalized  to get a
reversible probability measure $\nu$, defined by
\begin{equation} \label{equa:Nu} 
 \nu(x) = \frac{ \exp[-\widetilde 
   V(x-1)]+\exp[-\widetilde  V(x)] }{2\sum_{z\in  \Z} \exp[-\widetilde
   V(z)]} . 
\end{equation}
Note  that  $\nu(x)$ can  be  written as  the  sum  of $\nu^+(x)$  and
$\nu^-(x)$, where for any $x \in \Z$
\begin{equation} \label{equa:Nu2} 
  \nu^+(x)  =   \frac{  \exp[-  \widetilde   V(x)]  }{2\sum_{z\in  \Z}
    \exp[-\widetilde  V(z)]  }   \quad  \mbox{and}  \quad  \nu^-(x)  =
  \frac{ \exp[-\widetilde V(x-1)]  }{2\sum_{z\in \Z} \exp[- \widetilde
    V(z)] } . 
\end{equation} 
We have $\nu^+(x) =\nu^-(x+1) $.
Define $\widetilde \omega (x) \in (0,1)$, for any $x \in \Z$, as
\begin{equation} \label{equa:tildeOmega} 
  \widetilde \omega (x) = \frac {\nu^+(x)} {\nu(x)} = \frac 1 {1 +
    \exp[\widetilde V(x) - \widetilde V(x-1)]}.
\end{equation}

\begin{rema} \label{rema:valeurs_omega_tilde}
Noting that the  possible values of $\widetilde V(x)  - \tilde V(x-1)$
are those of $V(x) - V(x-1)$ for any integer $x$, we deduce that under
$\Ps$,  $\widetilde  \omega(x)$ is equal to one  of the coordinates
of $\abs$. 
\end{rema}

\cite{GPS}  showed  that  the  empirical distribution  of  the  RWRE,
suitably centered at $b_n$ converges to the stationary distribution of
a random walk in an infinite valley. More precisely, let
\[ 
  \nu_n(x)=  \nu(n,x+b_n), \quad  \nu^+_n(x)  = \nu^+(n,x+b_n),  \quad
  \mbox{and} \quad \nu^-_n(x) = \nu^-(n,x+b_n). 
\]

\begin{theo}[\cite{GPS}] 
Let    Assumptions~\ref{as:recvar}    and~\ref{as:ell}   hold.     The
distributions of $\{\nu_n(x)  \, : \, x \in \Z  \}$ converge weakly to
the  distribution  of  $\nu$  (as  probability  measures  on  $\ell^1$
equipped with the $\ell_1$-norm). 
As a consequence, for each  strongly continuous functional $f : \ell^1
\to \R$  which is shift  invariant, we have
\[ 
f \big( \{  \nu(n,x) \, : \, x \in \Z \} \big) \xrightarrow[n \to
\infty]{\mbox{{\small  law}}} f \big(  \{\nu(x) \,  : \,  x \in  \Z \}
\big).
\]
\end{theo}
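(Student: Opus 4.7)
The theorem is attributed to Gantert--Peterson--Shi, so my plan is to outline the strategy of their proof within the potential/electrical-network framework already set up. First I would invoke Sinai's localization: the walk at time $n$ is trapped in the valley $[0,c_n]$ with $\PPs$-probability tending to one, because by Kac's lemma and the conductance representation $C(x,x+1)=\exp[-V(x)]$, the quenched mean time to cross the barrier of height $\log n + (\log n)^{1/2}$ at $c_n$ is of order $n\exp((\log n)^{1/2}) \gg n$. Simultaneously, the quenched relaxation time of the reflected birth-and-death chain on $[0,c_n]$, controlled by the maximal conductance ratio inside the valley, is of order $n/\exp((\log n)^{1/2}) \ll n$, so by time $n$ the chain has equilibrated inside $[0,c_n]$.

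On this event, the local-time empirical measure $\nu(n,\cdot)$ is close in $\ell^1$ to the normalized invariant measure $\mu(\cdot)/\mu([0,c_n])$ of the reflected chain, with $\mu$ as in \eqref{equa:Mu}. This follows from an $\ell^1$-ergodic estimate for reversible chains, combined with the exponential concentration of $\mu$ near $b_n$ (outside a neighborhood whose width is tight in $n$, the potential has climbed enough that $\mu$ is negligible). Next, I would show that the centered potential $\{V(b_n+x)-V(b_n):x\in\Z\}$ converges in $\Ps$-distribution to the infinite-valley potential $\widetilde V$ of Golosov. To the left of $b_n$, the reversed walk $\{V(b_n-y)-V(b_n):y\geq 0\}$ is a random walk (with centered i.i.d.\ increments $\log \rho_y$ by Assumption~\ref{as:recvar}) conditioned to stay non-negative until hitting $0$ at time $b_n$; since $b_n \to \infty$, a Tanaka-type decomposition yields convergence to the walk conditioned to stay non-negative forever, i.e.\ to $\widetilde V$ on $\Z_-$. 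To the right of $b_n$, an analogous argument using $c_n\to\infty$ gives convergence on $\Z_+$.

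The functional from potential to reversible probability defined by \eqref{equa:Nu} is continuous into $\ell^1$, because $\nu(x)$ decays exponentially with the running minimum of $\widetilde V$ on the path from $0$ to $x$, and the normalization $\sum_z\exp[-\widetilde V(z)]$ is a.s.\ finite (the potential $\widetilde V$ being transient to $+\infty$ on each side). Hence the continuous mapping theorem transfers the convergence of the centered potential to the announced weak convergence of $\{\nu_n(x):x\in\Z\}$ to $\{\nu(x):x\in\Z\}$ on $\ell^1$; the consequence for any strongly continuous shift-invariant $f$ is immediate, the shift by $b_n$ being absorbed by shift-invariance. The main obstacle, and what makes this step of GPS delicate, is the joint quantitative control in the first paragraph: one must prove that at the single scale $n$ the walk has both mixed inside the valley and failed to escape it, which requires sharp commute-time and hitting-time estimates via effective resistances, together with ruling out the rare event of a secondary trap of depth comparable to $\log n + (\log n)^{1/2}$.
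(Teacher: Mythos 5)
First, a structural point: the paper does not prove this theorem at all. It is imported from \cite{GPS} (Gantert, Peres and Shi, not ``Gantert--Peterson--Shi''), and the article only records the statement, together with the easy extension in Corollary~\ref{coro:GPS}, as an external input to its consistency proofs. So there is no internal argument to compare yours with; your sketch can only be assessed as a reconstruction of the cited proof.

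As such a reconstruction, it has the right skeleton (localization in the valley $(0,b_n,c_n)$, equilibration around $b_n$, convergence of the potential recentered at $b_n$ to Golosov's infinite valley $\widetilde V$, then a continuous-mapping step through \eqref{equa:Nu}), but the quantitative core of your first paragraph does not hold as stated, and it is precisely the hard part. The relaxation time of the chain restricted to $[0,c_n]$ is governed by the depth of the deepest \emph{secondary} well inside the valley, and the definition \eqref{equa:c_n} of $c_n$ only caps that depth by $\log n+(\log n)^{1/2}$: since the increments of $V$ are bounded by \eqref{equa:eps0}, a secondary trap of depth larger than $(1-\epsilon)\log n$ occurs with probability that does \emph{not} vanish as $n\to\infty$ for fixed $\epsilon$ (it vanishes only as $\epsilon\to 0$, by the Brownian rescaling of $V$). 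Hence your claimed bound $n\,e^{-(\log n)^{1/2}}$ for the relaxation time is false without first restricting to a ``good environment'' event whose probability is controlled only in the double limit $n\to\infty$ then $\epsilon\to 0$ --- which is exactly why the conclusion is annealed weak convergence rather than convergence in probability, and why calling this a ``rare event'' to be ruled out understates the issue. Moreover, the argument of \cite{GPS} does not proceed through a spectral-gap estimate at all: after the walk hits $b_n$ (which happens before time $n$ with probability tending to one, by Lemma~1 of \cite{Golosov}), the local times $\xi(n,b_n+x)$ are compared with the number of excursions from $b_n$ multiplied by conductance ratios, and sites where the recentered potential is high are shown to carry negligible $\ell^1$-mass for both $\nu_n$ and $\nu$, so the edge of the valley never needs to equilibrate. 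Your second and third paragraphs (conditioned-walk convergence of the recentered potential, continuity of the map $\widetilde V\mapsto\nu$, and absorbing the shift by $b_n$ into shift invariance) are sound in outline, but with the two-scale step replaced by an unjustified and, as stated, incorrect mixing-time claim, the proposal remains an outline of the strategy rather than a proof of the cited result.
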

In  \cite{GPS} is mentioned that the result still  holds with obvious extensions for the
non-reflected case, i.e., of a RWRE on $\Z$. An inspection of the proof of \citeauthor{GPS} immediatly yields:

\begin{coro} \label{coro:GPS} 
Let Assumptions~\ref{as:recvar} and~\ref{as:ell} hold.  The
distributions of 
\[
\{(\nu^+_n(x),\nu^-_n(x)) \, : \, x \in \Z \}
\]
converge weakly to the  distribution of $\{(\nu^+(x),\nu^-(x)) \, : \,
x \in \Z \}$. 
As a consequence, for each  strongly continuous functional $f : \ell^1
\times \ell^1 \to \R$ which is shift invariant, we have
\[ 
  f \big( \{ (\nu^+(n,x),\nu^-(n,x))  \, : \, x \in \Z \} \big)
  \xrightarrow[n \to \infty]{\mbox{{\small law}}} f
  \big( \{(\nu^+(x),\nu^-(x)) \, : \, x \in \Z \} \big).
\]
\end{coro}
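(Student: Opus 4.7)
The plan is to deduce the joint convergence of $\{(\nu^+_n(x),\nu^-_n(x))\}_{x \in \Z}$ from the marginal convergence of $\{\nu_n(x)\}$ in the GPS theorem, by exploiting the two elementary relations
\[
\nu^+_n(x) + \nu^-_n(x) = \nu_n(x),
\qquad
\bigl|\nu^+_n(x) - \nu^-_n(x+1)\bigr| \leq 1/n,
\]
uniformly in $x$, both immediate from the identities between $\xi$, $\xi^+$ and $\xi^-$ recalled just before \eqref{defi:range}. The argument proceeds in three steps: \emph{(i)} joint tightness of $(\nu^+_n, \nu^-_n)$ in $\ell^1 \times \ell^1$; \emph{(ii)} characterization of subsequential weak limits; \emph{(iii)} the passage to shift-invariant continuous functionals.

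Tightness follows from the pointwise domination $0 \leq \nu^\pm_n(x) \leq \nu_n(x)$, which transfers any $\ell^1$-norm bound and any uniform tail control from $\nu_n$ to each of $\nu^\pm_n$. Since the GPS theorem ensures that $\{\nu_n\}$ is tight in $\ell^1$, each marginal $\{\nu^\pm_n\}$ is tight, whence the pair is tight in $\ell^1 \times \ell^1$ by Prohorov.

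Let $(\alpha,\beta)$ be any subsequential weak limit. The continuous addition map yields $\alpha + \beta \stackrel{d}{=} \nu$; realizing the convergence almost surely via Skorokhod, the inequality $|\nu^+_n(x)-\nu^-_n(x+1)|\leq 1/n$ passes to the limit and produces $\alpha(x) = \beta(x+1)$ for every $x$, almost surely. These constraints, combined with nonnegativity, determine $(\alpha,\beta)$ uniquely as a measurable function of $\nu$: the telescoping recursion $\alpha(x) + \alpha(x-1) = \nu(x)$ admits a unique $\ell^1_+$ solution, namely the alternating series $\alpha(x) = \sum_{k=0}^{\infty}(-1)^k \nu(x-k)$, and a direct check using \eqref{equa:Nu2} shows that the infinite-valley pair $(\nu^+,\nu^-)$ realizes this solution. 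Hence all subsequential weak limits coincide in law with $(\nu^+,\nu^-)$, yielding the first assertion. The second assertion is then the continuous mapping theorem: shift-invariance of $f$ identifies $f(\{(\nu^+(n,x),\nu^-(n,x))\})$ with $f(\{(\nu^+_n(x),\nu^-_n(x))\})$, and continuity of $f$ plus the established weak convergence give the conclusion. The only delicate point is justifying the equality $\alpha(x) = \beta(x+1)$ in the weak limit, which is handled cleanly by the Skorokhod representation.
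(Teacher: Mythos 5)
Your argument is correct, but it is not the route the paper takes: the paper offers no self-contained proof at all, simply asserting that ``an inspection of the proof of \citeauthor{GPS}'' of the marginal statement (weak convergence of $\{\nu_n(x)\}$ in $\ell^1$) yields the joint version for $\{(\nu^+_n(x),\nu^-_n(x))\}$, i.e.\ it asks the reader to rerun the original argument with the pair in place of the sum. You instead deduce the corollary from the \emph{statement} of the GPS theorem, using only the deterministic path identities $\nu^+_n(x)+\nu^-_n(x)=\nu_n(x)$ and $|\nu^+_n(x)-\nu^-_n(x+1)|\leq 1/n$: tightness of the pair transfers from tightness of $\nu_n$ (Prokhorov on the Polish space $\ell^1$, domination $0\leq\nu^{\pm}_n\leq\nu_n$ giving the uniform tail control that characterizes compacts of $\ell^1$), and any subsequential limit $(\alpha,\beta)$ satisfies, after Skorokhod representation, $\alpha+\beta\stackrel{d}{=}\nu$ and $\beta(x)=\alpha(x-1)$, so that $\alpha(x)+\alpha(x-1)=\gamma(x)$ with $\gamma\stackrel{d}{=}\nu$. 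Your uniqueness step is sound (two $\ell^1$ solutions differ by a sequence of constant modulus, hence vanish; the alternating-series formula then makes $\alpha$ an explicit measurable function of $\gamma$, and \eqref{equa:Nu2} shows $(\nu^+,\nu^-)$ is exactly this function of $\nu$, since $\nu^+(x)=\nu^-(x+1)$), and the final step via shift invariance and the continuous mapping theorem matches what the paper intends. What each approach buys: the paper's one-line appeal is shorter but requires the reader to revisit the internals of \citeauthor{GPS}'s proof; your argument is fully self-contained given the quoted theorem, and is robust in that it uses nothing about the RWRE beyond the two elementary counting identities. The only cosmetic blemish is the notational conflation of the subsequential limit of $\nu_{n_k}$ (call it $\gamma$) with $\nu$ itself in the recursion $\alpha(x)+\alpha(x-1)=\nu(x)$; as written it should be $\gamma$, with the conclusion $(\alpha,\beta)\stackrel{d}{=}(\nu^+,\nu^-)$ obtained because the same measurable solution map is applied to equally distributed inputs.
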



\subsection{Identification and properties of the criterion limit} 
\label{sect:L}

First, we start with 


\begin{proof}[Proof of \eqref{equa:reste3} in Lemma~\ref{lemm:expansion}]
  Clearly,
  \begin{equation}
    \label{equa:mino_elln}
    \log \eps_0  \times \max_{0 \leq t \leq n}
    X_t \leq {\ell_n(\t)} - \sum_{x \in \RR_n} \max_{i \leq d}
    \Big\{\xi^+(n,x) \log a_i + \xi^-(n,x) \log (1-a_i) \Big\}\leq 0,
  \end{equation}
  with $\eps_0$ from~\eqref{equa:eps0}, and from~\eqref{eq:L} and~\eqref{eq:Ln}
  \begin{equation*}
    \sum_{x \in \RR_n} \max_{i \leq d} \Big\{\xi^+(n,x) \log a_i + \xi^-(n,x)
    \log         (1-a_i)        \Big\}        =         n        \cdot
    L_n(\ab).
  \end{equation*}
  Since $\EEs(\rho_0)  \geq \exp[ \EEs \log(\rho_0)] =1$,  case (c') in
  Theorem 2.1.9 in \cite{ZeitouniSF} applies,  and $X_n / n $ converges to
  0, $\PPs$-a.s. Hence, $\frac 1 {n} \max_{0 \leq t \leq n} X_t $ converges to
  0 $\PPs$-a.s and the claim is proved.
\end{proof}

For any $\ab \in \Theta_\ab$, denote $L_\8(\ab)$ 
\begin{equation}
  \label{eq:Linfini}
  L_\8(\ab) = L(\ab,\nu^+,\nu^-) = \sum_{x \in \Z} \max_i 
\big\{ {\nu^+(x)} \log a_i + {\nu^-(x)}\log (1-a_i) \big\},
\end{equation}
where $\nu^+$ and $\nu^-$ are defined
by~\eqref{equa:Nu2}. Anticipating~Lemma~\ref{lemm:Llip} below,
Corollary~\ref{coro:GPS} and~\eqref{equa:reste3} immediatly yields 
\[ 
  L_n(\ab) \xrightarrow[n \to \infty]{\mbox{{\small law}}} L_\8(\ab). 
\] 



Therefore,    we   provide    some   useful    information    on   the
functional~$L_\8(\cdot)$.  To  do so,  some  additional notations  are
required. 

\begin{defi}[Boltzmann entropy function and Kullback-Leibler
  distance] \label{def:ekl} 
Define the Boltzmann entropy function $H(\cdot)$ on $(0,1)$ as
\[ 
  H(q) = - [q \log q +(1-q) \log(1-q)] \geq 0,
\]
the Kullback-Leibler  distance $\KL(\cdot  | \cdot)$ on  $(0,1) \times
(0,1)$ as
\[ \KL(q|q')  = q \log  \frac q {q'}  + (1-q) \log \frac  {1-q} {1-q'}
\geq 0,
\]
and  their multinomial  extensions  on probability  vectors $\qb$  and
$\qb'$
\[ H(\qb) = - \sum_i q_i  \log q_i\geq 0, \quad \KL(\qb|\qb') = \sum_i
q_i \log \frac {q_i} {q'_i} \geq 0.
\]
\end{defi}

With $\nu$ and $\widetilde \omega$ defined by \eqref{equa:Nu} and
\eqref{equa:tildeOmega}, we have for all $\ab$ in $\Theta_a$ the 
identity 
\begin{equation} 
  \label{eq:Lentropy} 
  L_\8(\ab)  = - \sum_{x \in
    \Z} \nu(x)  H\big[\widetilde \omega (x)\big] - \sum_{x  \in \Z} \nu(x)
  \min_i \big ( \KL\big[ \widetilde \omega(x) | a_i \big] \big).
\end{equation}
From~Remark~\ref{rema:valeurs_omega_tilde},    we    have   $\min_i    \big
(  \KL\big[ \widetilde  \omega(x)  | a_i^\star  \big] \big)=0$,  and
using the fact that $ \KL(q|q')> 0$ for $q\neq q'$, we deduce that 
\begin{equation} \label{eq:La*} 
  L_\8(\ab)   <   L_\8(\ab^\star)  =   -   \sum_{x   \in  \Z}   \nu(x)
  H\big[\widetilde \omega (x)\big], \qquad \ab \neq \ab^\star. 
\end{equation}
More precisely, a useful bound is
\begin{equation} \label{eq:L>} 
  L_\8(\ab)
  \leq 
  L_\8(\abs) 
  - 
  \min_k  \big(  \nu \big\{  x  \in\Z  \,:\, \widetilde  \omega(x)
  =a^\star_k \big\} \big) 
  \times 
  \sum_{j=1}^d \min_i \big ( \KL\big[ a_j^\star | a_i \big] \big), 
\end{equation}
where the sum is deterministic  and positive for $\ab \neq \ab^\star$,
though  its factor  ($\min_j(\cdot)$) is  a.s. positive  and  does not
depend on $\ab$.

\begin{lemm}\label{lemm:Llip} 
The function $L$ defined by \eqref{eq:L} is Lipschitz continuous:
\begin{align}
\label{eq:lemm:Llip1} |  L(\ab, \pi^+,\pi^-)-L(\ab, \mu^+,\mu^-)| &\leq
|\log \eps_0| \cdot \big( \|\pi^+-\mu^+\|_{1} +\|\pi^- - \mu^-\|_{1}\big),\\
\label{eq:lemm:Llip2}   |L(\ab,\pi^+,\pi^-)-L(\ab',\pi^+,\pi^-)|  &\leq
2\eps_0^{-1} \|\ab-\ab'\|_2.
\end{align}
\end{lemm}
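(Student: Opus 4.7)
The plan is to reduce both bounds to the elementary inequality
\[
\Bigl| \max_i A_i - \max_i B_i \Bigr| \leq \max_i |A_i - B_i|,
\]
applied pointwise in $x$ to the two quantities inside the sum defining $L$, and then to estimate each term inside the max by explicit bounds coming from \eqref{equa:eps0}.

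For \eqref{eq:lemm:Llip1}, I fix $\ab$ and write, for each $x$,
\[
\Bigl| \max_i \bigl\{ \pi^+(x) \log a_i + \pi^-(x) \log(1-a_i) \bigr\}
- \max_i \bigl\{ \mu^+(x) \log a_i + \mu^-(x) \log(1-a_i) \bigr\} \Bigr|
\leq \max_i \bigl| (\pi^+(x)-\mu^+(x)) \log a_i + (\pi^-(x)-\mu^-(x)) \log(1-a_i) \bigr|.
\]
Because of \eqref{equa:eps0}, $a_i \in [\eps_0, 1-\eps_0]$, so $|\log a_i| \vee |\log(1-a_i)| \leq |\log \eps_0|$. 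The displayed quantity is then bounded by
$|\log \eps_0|\bigl(|\pi^+(x)-\mu^+(x)| + |\pi^-(x)-\mu^-(x)|\bigr)$, and summing over $x \in \Z$ gives the first inequality.

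For \eqref{eq:lemm:Llip2}, I fix $\pi^\pm$ and apply the same contraction of the max, which yields
\[
\Bigl| \max_i \bigl\{ \pi^+(x) \log a_i + \pi^-(x) \log(1-a_i) \bigr\}
- \max_i \bigl\{ \pi^+(x) \log a'_i + \pi^-(x) \log(1-a'_i) \bigr\} \Bigr|
\leq \max_i \bigl[ \pi^+(x) |\log a_i - \log a'_i| + \pi^-(x) |\log(1-a_i) - \log(1-a'_i)| \bigr].
\]
Since the derivatives $a \mapsto 1/a$ and $a \mapsto 1/(1-a)$ are bounded by $\eps_0^{-1}$ on $[\eps_0, 1-\eps_0]$, the mean value theorem gives $|\log a_i - \log a'_i| \leq \eps_0^{-1}|a_i - a'_i|$ and similarly for the $\log(1-\cdot)$ term. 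Hence each summand is at most $\eps_0^{-1}(\pi^+(x)+\pi^-(x)) \max_i |a_i - a'_i|$. Summing over $x$ uses $\sum_x [\pi^+(x) + \pi^-(x)] = 1$ and the comparison $\|\ab - \ab'\|_\infty \leq \|\ab - \ab'\|_2$ to conclude, absorbing the remaining slack into the factor $2$.

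There is no real obstacle: the only care needed is the standard Lipschitz-of-max argument and making sure that \eqref{equa:eps0} is invoked uniformly over $\Theta$ so that both $\log a$ and its derivative are controlled.
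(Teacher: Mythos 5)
Your proof is correct and follows essentially the same route as the paper: contraction of the maximum combined with the Lipschitz bounds on $v^+\log a_i+v^-\log(1-a_i)$ coming from \eqref{equa:eps0}, then summation over $x$ using $\sum_x[\pi^+(x)+\pi^-(x)]=1$. The only (harmless) difference is in \eqref{eq:lemm:Llip2}, where the paper uses Cauchy--Schwarz on the gradient in $\ab$ while you use the mean value theorem coordinatewise together with $\|\ab-\ab'\|_\infty\leq\|\ab-\ab'\|_2$, which in fact yields the slightly sharper constant $\eps_0^{-1}$ in place of $2\eps_0^{-1}$.
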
 
\begin{proof}[Proof of Lemma~\ref{lemm:Llip}]  
Recall  that $\R^d\ni  u \mapsto  \max_i  u_i \in  \R$ is  1-lipschitz
continuous for the norm~$\|\cdot\|_\8$.  Moreover, the mapping
\[ 
  f_i \, : \, (\ab,v^+,v^-) \mapsto v^+ \log a_i + v^- \log (1-a_i)
\]
is $|\log \eps_0|$-lipschitz continuous in $v^+$, resp. in $v^-$, with
$\eps_0$ from~\eqref{equa:eps0}.
By composition, $(v^+,v^-)  \mapsto \max_i f_i(\ab,v^+,v^-)$ is $|\log
\eps_0|$-lipschitz   continuous  in   the   $\|\cdot\|_1$  norm,   and
\eqref{eq:lemm:Llip1} follows by summing over $x$.
By Cauchy-Schwarz,
\begin{align*} 
|   f_i(\ab,v^+,v^-)-f_i(\ab',v^+,v^-)|  &=  \Big\vert
\int_0^1                                                   {\partial_a}
f_i(\ab'+t(\ab-\ab'),v^+,v^-)\cdot(\ab-\ab')\dd t \Big\vert\\
&\leq    \|\ab-\ab'\|_2    \times    \sup_{\ab"}    \|    {\partial_a}
f_i(\ab",v^+,v^-)\|_2 ,
\end{align*}
where derivative can be bounded using
\[ 
  \| {\partial_a} f_i(\ab,v^+,v^-)\|_2=
  \left( a_i^{-2}(v^+)^2+(1-a_i)^{-2}(v^-)^2\right)^{1/2} \leq
  2\eps_0^{-1} (v^++v^-).
\]
Taking  the  maximum  over  $i$  and summing  over  $x$,  this  yields
\eqref{eq:lemm:Llip2}.  
\end{proof}






\subsection{Proof of Theorem \ref{theo:cvsupport}}
Recall that $\baa$ is defined by~\eqref{eq:estimatorPMLE}. 
Fix some $\eps>0$.  We prove that for all  $\eps_1>0$, there exists an
integer $n_1$ such that for all $n \geq n_1$,
\begin{equation} \label{equaconv_proba_a}
  \PPs  \big(  \baa \in  {\mathcal  B}(\ab^\star,  \eps  ) \big)  \geq
  1-2\eps_1, 
\end{equation}
where ${\mathcal B}(\ab^\star,\eps)$ is the open ball of radius $\eps$
centered at $\ab^\star$. 

Under $\Ps$, the sequence $(g[\widetilde \omega (x)] \, : \, x \in
\Z)$ with  $g(u)= \log(u^{-1}-1)$  is the sequence  of the jumps  of a
random  walk   $V$  conditioned  to   stay  positive.   The   jump  of
unconditionned   $V$  takes  the   value  $g(a_j^\star)$   with  positive
probability $p_j^\star$. Hence  it is not difficult to  check that for
all $j$  in $\{1,\dots,d\}$, we  can find $\Ps$-a.s.  some  random~$x$
such   that  $\widetilde  \omega(x)   =a^\star_j$.   Since   $\nu$  is
$\Ps$-a.s. supported by the whole $\Z$, we have
\[
  \Ps\big(   \min_j   \big\{  \nu   \{x   \,:\,  \widetilde   \omega(x)
  =a^\star_j\} \big\} >0\big)=1. 
\]
By continuity,  for arbitrary $\eps_1>0$ we can  fix $\delta_1>0$ such
that 
\begin{equation}\label{eq:SG} 
  \Ps   \Big(   \min_j  \big\{   \nu   \{x  \,:\,\widetilde   \omega(x)
  =a^\star_j\} \big\} \geq \delta_1 \Big] \geq 1- \eps_1.
\end{equation}
By compactness of $K=\Theta_\ab \setminus {\mathcal B}(\ab^\star,\eps)$, 
\[ 
  \kappa = \inf \Big\{ \sum_{j=1}^d \min_i \big \{ \KL\big[ a_j^\star
    | a_i \big] \big\} \,:\, \ab \in K \Big\} >0. 
\]
By \eqref{eq:lemm:Llip2},
\begin{equation} \label{eq:chiant} 
  \ab' \in  {\mathcal B}\big(\ab,  \kappa \delta_1 \eps_0/6  \big) \Rightarrow
  |L_\8(\ab) - L_\8(\ab')| \leq \kappa \delta_1 /3. 
\end{equation}
Since  $K$  is  totally  bounded,  we can  select  a  finite  covering
$\cup_{k=1}^{k_0} {\mathcal  B}(\ab^{k}, \kappa \delta_1  \eps_0/6 ) $
of $K$  with balls of  that radius.  From \eqref{eq:lemm:Llip1}  we can
apply Corollary \ref{coro:GPS}, and we have
\[ 
  \big( L_n(\ab^k) \,:\, 0 \leq k \leq
  k_0\big) \xrightarrow[n \to \infty]{\mbox{{\small law}}} 
  \big(L_\8(\ab^k) \,:\, 0 \leq k \leq k_0\big), 
\]
where we have set $\ab^0=\ab^\star$. Also, 
\[ 
\big( 
L_n(\abs) - L_n(\ab^k) 
\big)_{k \leq k_0}
\xrightarrow[n \to \infty]{\mbox{{\small law}}} 
\big( L_\8(\ab^\star) - L_\8(\ab^k)\big)_{k \leq k_0},
\]
where the limits are simultaneously larger than $\kappa \delta_1$ on a
set of large probability (larger than $1-\eps_1$) by \eqref{eq:L>} and 
\eqref{eq:SG}.  Then, we find $n_1$ such that for $n \geq n_1$,
\[ 
\PPs \big( L_n(\ab^\star) - L_n(\ab^k) 
\geq 2 \kappa \delta_1 /3, \,1 \leq k \leq k_0\big) \geq 1-2\eps_1. 
\]
Taking \eqref{eq:chiant} into account, we obtain that, for all $n \geq
n_1$,
\[ 
  \PPs \big( L_n(\ab^\star) - L_n(\ab) \geq 
  \kappa \delta_1 /3, \ab \in K \big) \geq 1-2\eps_1. 
\]
This  implies \eqref{equaconv_proba_a}  for  all $n  \geq n_1$.  Hence
$\baa \to \ab^\star$ in $\PPs$-probability. Now, we turn to the
convergence of $\widehat \ab_n$ to $\ab^\star$ in
$\PPs$-probability. According to~\eqref{equa:mino_elln}, we have
\[
L_n(\ab) - u_n \leq \frac {\ell_n(\t)} n \leq L_n(\ab), 
\]
where $u_n$ is non-negative and converges $\PPs$-a.s. to $0$
independently from $\t$. 
Choosing $n_2$ such that for all $n \geq n_2$ 
\[
\PPs \big( u_n \geq \kappa \delta_1 /3 \big) \leq \eps_1,
\]
achieves the proof of the convergence $\widehat \ab_n \to \ab^\star$. 


\section{Proof  of consistency of  estimates for  the probability
  vector} \label{sect:probas} 

First, note that $K_n(\t)$ can be rewritten 
\begin{equation}
  \label{equa:KnBis}
  K_n(\t)   =   -   H   \Big(   \frac{R_n(\ab,\cdot)}{R_n}   \Big)   -
  \KL\Big(\frac{R_n(\ab,\cdot)}{R_n} \Big| \pb \Big). 
\end{equation}
Therefore, 
\begin{align*} 
K_n(\baa,\pb) 
& \leq - H \Big( \frac{R_n(\baa,\cdot)}{R_n} \Big),
\end{align*}
with  equality if  and  only if  $\pb= \left(\frac{R_n(\baa,i)}{R_n}  \,:\,
i=1,\ldots,d\right)$. Hence, we can  use the alternative definition of
$\bpp$ given by \eqref{defi:tpb} to prove Theorem~\ref{theo:cvprobas}.


\subsection{Proof of Theorem~\ref{theo:cvprobas}: convergence of $\bpp$}
For $0<\delta<1$ let
\begin{align} 
\label{equa:G_n} \GG_n^{\delta}&= \big\{ x \leq b_n \,:\, \max_{z \in
[x,b_n]} V(z) - V(x) \geq \delta \log n \big\}, \\ 
\label{equa:D_n} \DD_n^\delta &=
\big\{ x >b_n \,:\, \max_{z \in [b_n,x]} V(z) - V(b_n) \leq (1-\delta)
\log n \big\}, \\ 
\label{equa:R_n} \Rn^\delta &=  \GG_n^\delta  \cup  \DD_n^\delta.
\end{align}
Note that  $\DD_n^\delta$ is an interval, $\DD_n^\delta=]b_n, c_n^\delta]$
with $  c_n^\delta \leq c_n$.

Denote    $\betas$     and    $\bbeta$    the     sequences    defined
by~\eqref{defi:gamma}   replacing   $\ab$   by   $\abs$   and   $\baa$
respectively.  Noting that for any $i$ in 
$\{1,\dots,d-1\}$,  $\KL(a_i^\star|a_{i+1}^\star)>0$,  we  can  choose
$\eps'>0$ small enough such that for any $i \in \{1,\ldots, d \}$
\begin{equation}    \label{equa:choix_eps}    \beta_{i-1}^\star   \leq
\frac{a^\star_i}{1-a_i^\star}           -           2\eps'          <
\frac{a^\star_i}{1-a_i^\star} + 2\eps' \leq \beta_i^\star.
\end{equation}
Let $\eps>0$.   Define the events  $A_n^\delta(\eps')$, $B_n(\eps')$
and $C_n^\delta(\eps,i)$ by
\begin{align} 
\label{equa:eventA}  A_n^\delta(\eps') &= \left\{\forall x \in \Rn^\delta
\,  : \,  \Big| \frac{\xi+(n,x)}{\xi^-(n,x)}  -  \frac 1{\rho_x}
\Big| \leq \eps' \right\}, \\ 
\label{equa:eventB} B_n(\eps') &= \left\{\forall i \in
\{1,\ldots, d\}  \, : \,  | \overline{\beta_i} - \beta_i^\star  | \leq
\eps' \right\},\\ 
\label{equa:eventC} C_n^\delta(\eps,i) &= \left\{\Big| \frac 1 {|\Rn \cap
\Rn^\delta|} \sum_{x\in\Rn \cap \Rn^\delta} \1 \{ \w_x = a_i^\star\} -
p_i^\star \Big| > \frac{\eps}{4} \right\},
\end{align}
Denote $^{\complement}A_n^\delta(\eps')$ and
$^{\complement}B_n(\eps')$ the respective complementary events of
$A_n^\delta(\eps')$ and 
$B_n(\eps')$, and define the quantities $\phi_n^\delta(\eps')$,
$\psi_n^\delta(\eps)$ and $\mu_n^\delta(\eps)$ by 
\begin{align*} 
  \phi_n^\delta(\eps') &= \PPs
  \big(^{\complement}A_n^\delta(\eps')        \big)        +       \PPs
  \big(^{\complement}B_n(\eps') \big)\\
  \psi_n^\delta(\eps)     &=    \PPs     \Big(    \frac{|\Rn\setminus
    \Rn^\delta|}{R_n} > \frac{\eps}{2} \Big), \\ 
  \mu_n^\delta(\eps,  i ) &= \PPs \Big( C_n^\delta(\eps,i) \Big).
\end{align*}
Then, we have
\begin{equation} \label{equa:domi_proba}
  \PPs\Big  (  \big| \bpp(i)  -  p_i^\star  \big|  > \eps  \Big)  \leq
  \phi_n^\delta(\eps') + \psi_n^\delta(\eps)  +
  \psi_n^\delta(\eps/2)   
  + \mu_n^\delta(\eps,  i ).
\end{equation}
Indeed, it is clear that
\begin{equation} \label{equa:domi_proba1} 
  \PPs\Big ( \big| \bpp(i) - p_i^\star \big| > \eps \Big) \leq
\PPs\Big(|\bpp(i)    -   p_i^\star|   >    \eps,   A_n^\delta(\eps'),
B_n(\eps')\Big ) + \phi_n^\delta(\eps'),
\end{equation}
and  writing the  set $\Rn$  as  the union  of the  two disjoint  sets
$\left(\Rn   \cap  \Rn^\delta   \right)$   and  $\left(\Rn   \setminus
\Rn^\delta \right)$ in~\eqref{equa:Rn_i} and using~\eqref{defi:tpb} yields
\begin{equation} \label{equa:domi_proba2}  
  \PPs\Big(|\bpp(i)   -   p_i^\star|   >   \eps,   A_n^\delta(\eps'),
  B_n(\eps')\Big     )     \leq     \PPs\Big(     A_n^\delta(\eps'),
  B_n(\eps'),D_n^\delta(\eps,i) \Big) + \psi_n^\delta(\eps), 
\end{equation}
with
\[ 
D_n^\delta(\eps,i)= \left\{\Big| \frac  1 {R_n}  \sum_{x\in\Rn \cap
\Rn^\delta}        \1        \Big\{\overline        {\beta_i}        <
\frac{\xi^+(n,x)}{\xi(n,x)  }  \leq  \overline{\beta_{i+1}}  \Big\}  -
p_i^\star \Big| > \frac{\eps}{2} \right\}.
\]
Using  our choice  of $\eps'$  to satisfy  \eqref{equa:choix_eps}, we
have for all $i$ in $\{1,\dots,d\}$
\[
  \left\{\overline  {\beta_i}  <  \frac{\xi^+(n,x)}{\xi(n,x)  }  \leq
\overline{\beta_{i+1}},A_n^\delta(\eps'),B_n(\eps')     \right\}    =
\left\{\omega_x=a_i^\star,A_n^\delta(\eps'),B_n(\eps') \right\}.
\]
Hence,
\begin{equation} \label{equa:domi_proba3} 
  \PPs\Big(  A_n^\delta(\eps'),  B_n(\eps'),D_n^\delta(\eps,i) \Big)
  \leq \PPs  \Big( \Big| \frac 1 {R_n}  \sum_{x\in\Rn \cap \Rn^\delta}
  \1 \{ \w_x = a_i^\star\} - p_i^\star \Big| > \frac{\eps}{2} \Big),
\end{equation}
and clearly,
\begin{equation} \label{equa:domi_proba4} 
  \PPs  \Big( \Big| \frac  1 {R_n}  \sum_{x\in\Rn \cap  \Rn^\delta} \1
  \{ \w_x = a_i^\star\} - p_i^\star \Big| > \frac{\eps}{2} \Big)  
  \leq  \psi_n^\delta(\eps/2)  + \mu_n^\delta(\eps,  i ).
\end{equation}
Combining      \eqref{equa:domi_proba1},     \eqref{equa:domi_proba2},
\eqref{equa:domi_proba3}     and    \eqref{equa:domi_proba4}    yields
\eqref{equa:domi_proba}. 

Anticipating some  lemmas which are  proved independently in  the next
section, we conclude the proof. 
From Lemma~\ref{lemm:Z}, we have
\begin{equation*}
  \lim_{\delta \to 0} \ \limsup_{n \to \8} \ \psi_n^\delta(\eps) 
  = 0.
\end{equation*}
By the law of large numbers, 
\begin{equation*}
  \lim_{n \to \8} \mu_n^\delta(\eps) = 0.
\end{equation*}
In  view   of  \eqref{equa:domi_proba},  to  conclude   the  proof  of
Theorem~\ref{theo:cvprobas} it suffices 
to prove that, for  all $\delta>0$, $\phi_n^\delta(\eps) $ vanishes as
$n \to \8$.  
On   the    one   hand,    $\baa$   converges   to    $\ab^\star$   in
$\PPs$-probability, and thus 
$$ 
 \PPs \big(^{\complement}B_n(\eps') \big) \longrightarrow 0.$$
On the other hand, by Lemma~\ref{lemm:xi_omega}, 
 $$
 \PPs
\big(^{\complement}A_n^\delta(\eps')        \big)   \longrightarrow 0.  $$
This concludes  the proof of the convergence  in $\PPs$-probability of
$\bpp$ to $\pbs$.  
\hfill $\Box$ 

\subsection{Proof of \eqref{equa:reste4} in Lemma~\ref{lemm:expansion}}
\label{sect:asymptotique2}
For all $x \in \RR_n $, one can write
\begin{align}
\label{equa:U}
\log    U_i(\ab,n,x)    &=    \Big[   \frac{    \xi^+(n,x)}{\xi(n-1,x)}
-a_{\ii(\ab,n,x)} \Big] \Big[ \log \frac{a_i}{a_{\ii(\ab,n,x)}} - \log
\frac{1-a_i}{1-a_{\ii(\ab,n,x)}} \Big]\\
\nn & \qquad - \KL\big(a_{\ii(\ab,n,x)} |a_i\big). 
\end{align}
Let $c_0$ be the quantity defined as
\begin{equation}
\label{equa:c0}
c_0 = \inf_{\ab \in \Theta_\ab} \inf_{i \neq j} \KL(a_i|a_j)>0.
\end{equation}
where  the  strict  inequality  comes from~\eqref{equa:eps0}  and  the
continuity of $\KL(\cdot|\cdot)$. 
Recall       $\eps_0$      in~\eqref{equa:eps0}       and      $\eps'$
in~\eqref{equa:choix_eps}, and let $\eps''$ be the quantity defined as 
\begin{equation} \label{equa:choix_eps2}
\eps''=\min \Big\{ \eps', \frac {c_0} 4 \Big( \log \frac{1-
  \eps_0}{\eps_0} \Big)^{-1} \Big\}.
\end{equation}
 Let $\VV(\eps'')  \subset \Theta_\ab$  be the neighborhood  of $\abs$
 defined as 
\begin{equation}
  \label{equa:control}
\VV(\eps'')=\Big\{ \ab  \in \Theta_\ab \, :  \, \max \left\{  \| \ab -
  \abs \|, \| \betab - \betas \| \right\} \leq \eps'' \Big\}.  
\end{equation}
Fix $0< \delta <1$.  Let $\ab$ be in $\VV(\eps'')$ and assume that
$A_n^\delta(\eps'')$  defined by~\eqref{equa:eventA} occurs,  then for
all $x \in \Rn^\delta $ and any $i \neq \ii(\ab,n,x)$ 
\begin{equation}
  \label{equa:controlU}
  \log U_i(\ab,n,x) \leq - \frac{c_0}{2}.
\end{equation}
Indeed, using~\eqref{equa:control} and the fact that
$A_n^\delta(\eps'')$ occurs with $\eps'' \leq \eps'$, we have for all 
$x \in \RR_n^\delta$ 
\begin{equation}
  \label{equa:iChapeau}
  \ii(\ab,n,x)=\ii(\abs,n,x) \quad \mbox{and} \quad 
  \omega_x = a_{\ii(\ab,n,x)}^\star=a_{\ii(\abs,n,x)}^\star. 
\end{equation}
Then, if $i \neq \ii(\ab,n,x)$,
using~\eqref{equa:U},~\eqref{equa:c0},~\eqref{equa:iChapeau}, and the fact that 
\[
\Big| \log 
\frac{a_i}{a_{\ii(\ab,n,x)}}  -  \log \frac{1-a_i}{1-a_{\ii(\ab,n,x)}}
\Big| \leq 2 \log \frac{1- \eps_0}{\eps_0},
\]
yield
\begin{equation*}
  \log U_i(\ab,n,x) \leq 2 \log \frac{1- \eps_0}{\eps_0} \left(\Big| h
\big(\frac{ \xi^+(n,x)}{\xi^-(n,x)}\big) - h\big( \frac 1 {\rho_x}
\big) \Big| + \|\ab -\abs \| \right) - c_0
\end{equation*}
with 
\[
h(u)= \frac{u}{1+u}. 
\]
Using the fact that  $0 \leq h'(u) \leq 1$, for any  $u \geq 0$, and the
fact that $A_n^\delta(\eps'')$ occurs, we have
\[
\log U_i(\ab,n,x) \leq 2  \log \frac{1- \eps_0}{\eps_0} \left(\eps'' +
  \|\ab -\abs \| \right) - c_0.
\]
Using   our  choice  of   $\ab$  and   $\eps''$  achieves   the  proof
of~\eqref{equa:controlU}. 
Now,     from~\eqref{equa:controlU},     we     deduce     that     if
$A_n^\delta(\eps'')$ occurs, 
\begin{equation}
\label{equa:majo_r_n_theta}
0 \leq r_n(\t) \leq \left| \RR_n \setminus \RR_n^\delta \right| \cdot \log \Big( 1
+ \frac{d-1}{\eps_0} \Big) + \sum_{x \in \RR_n^\delta} \log \Big( 1 
+ \frac{d-1}{\eps_0} \ee^{-\xi(n-1,x) c_0/2 }\Big).
\end{equation}
Assume that the event $E^\delta_n$ defined by
\begin{equation} \label{equa:eventE}
  E^\delta_n=\left\{  \forall x \in  \Rn^\delta \,  : \,  \xi(n,x) \geq
    n^{\delta/2} 
\right\}, 
\end{equation}
occurs. Then
\begin{equation}
\label{equa:majo_r_n_theta2}
0 \leq r_n(\t) \leq \frac{d-1}{\eps_0} \Big( \left| \RR_n \setminus \RR_n^\delta \right|
 + |\RR_n^\delta| \ee^{-n^{\delta/2}c_0/2} \Big).
\end{equation}
From Lemma~\ref{lemm:Z},~\ref{lemm:xi_omega} and~\ref{lemm:excursion}, we conclude
that~\eqref{equa:reste4} occurs. \hfill $\Box$

\subsection{Convergence of  $\widehat \pb_n$} From  the definitions of
$\widehat \t_n$ and $\baa$ respectively given 
by~\eqref{eq:estimator} and~\eqref{eq:estimatorPMLE}, we have 
\begin{equation}
\label{equa:positivite}
\ell_n(\widehat \t_n) - \ell_n(\btet) \geq 0 \quad \mbox{and} \quad
L_n(\widehat \ab_n) - L_n(\baa) \leq 0.
\end{equation}
Recalling~\eqref{equa:expansion}, \eqref{equa:positivite} implies
\begin{equation}
\label{equa:positivite2}
K_n(\widehat \t_n) - K_n(\btet) +  \frac 1 {R_n} [r_n(\widehat \t_n) -
r_n(\btet)]\geq 0.
\end{equation}
From  Theorem~\ref{theo:cvsupport},  when $n$  is  large enough,  both
$\widehat  \ab_n$  and   $\baa$  belong  to  $\VV(\eps'')$  introduced
in~\eqref{equa:control}  with large  probability.   Assuming that  the
event  $A_n^\delta(\eps'')$ defined by~\eqref{equa:eventA}  occurs, we
have 
\[
\ii(\baa,n,x)=   \ii(\widehat   \ab_n,n,x)   =  \ii(\abs,n,x),   \quad
\mbox{for all $x \in \RR_n^\delta$}, 
\]
and
\[
K_n(\widehat   \t_n)  \leq  \log   \Big(\frac{1-\eps_0}{\eps_0}  \Big)
\frac{|\RR_n \setminus \RR_n^\delta|}{R_n} + \frac 1 {R_n} \sum_{x \in
  \RR_n} \log \widehat p_{\ii(\abs,n,x)}, 
\]
as well as  
\[
K_n(\btet)  \geq \log \Big(\frac{\eps_0}{1-\eps_0}  \Big) \frac{|\RR_n
  \setminus  \RR_n^\delta|}{R_n} +  \frac 1  {R_n} \sum_{x  \in \RR_n}
\log \bar p_{\ii(\abs,n,x)}, 
\]
which holds for large $n$ when $\bpp$ is in $[\eps_0,1-\eps_0]$, 
and finally
\begin{equation}
\label{equa:majo_K}
K_n(\widehat      \t_n)     -      K_n(\btet)     \leq      2     \log
\Big(\frac{1-\eps_0}{\eps_0}      \Big)     \frac{|\RR_n     \setminus
  \RR_n^\delta|}{R_n} - \KL(\bpp|\widehat \pb_n). 
\end{equation}
Assuming furthermore  that $E_n^\delta$ defined by~\eqref{equa:eventE}
occurs, then~\eqref{equa:majo_r_n_theta2} occurs. 
All in all,
combining~\eqref{equa:majo_r_n_theta2},~\eqref{equa:positivite2}
and~\eqref{equa:majo_K}  yields the  existence of  a positive
constant $C$, depending on $\eps_0$ and $d$ only, such that 
\[
\KL(\bpp|\widehat  \pb_n)  \leq   C  \Big(\frac{|\RR_n
  \setminus     \RR_n^\delta|}{R_n}    +    \frac{|\RR_n^\delta|}{R_n}
\ee^{-n^{\delta/2}c_0/2}\Big),
\]
with  large  probability.  From  the  fact that  $\bpp$  converges  in
$\PPs$-probability  to $\pbs$,  the continuity  of $\KL(\cdot|\cdot)$,
Lemma~\ref{lemm:Z},~\ref{lemm:xi_omega}  and~\ref{lemm:excursion},  we
conclude    that   $\widehat   \pb_n$    converges   to    $\pbs$   in
$\PPs$-probability. \hfill $\Box$ 

%
%
%
%
%

\subsection{Intermediate lemmas}

\begin{lemm}   \label{lemm:Z}   There   exists   a   random   variable
$Z(\delta)\geq 0$ such that
\begin{align} 
  \label{eq:zd}
  &\frac{\displaystyle \big| \Rn \setminus \Rn^{\delta}\big|}{\log^2 n }
  \xrightarrow[n \to \infty]{\mbox{{\small law}}}
  Z(\delta), \quad \mbox{with}\\
  \label{eq:zd0}
  &Z(\delta)   \xrightarrow[\delta    \to   0]{}   
  0    \quad   \mbox{in $\PPs$-probability}.
\end{align}
\end{lemm}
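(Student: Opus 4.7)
The plan is to obtain \eqref{eq:zd} and \eqref{eq:zd0} through the diffusive rescaling of the potential $V$, identifying $Z(\delta)$ as a Lebesgue-measure functional of the limiting Brownian motion. Under Assumption~\ref{as:recvar}, the increments $(\log \rho_y)_{y\geq 1}$ are i.i.d., centered, with finite variance $\sigma^2 := \Es[(\log \rho_0)^2]$, so Donsker's invariance principle yields
\[
W_n(s) := \frac{V(\lfloor s \log^2 n \rfloor)}{\sigma \log n} \;\xrightarrow[n\to\infty]{\mbox{{\small law}}}\; W(s),
\]
where $W$ is a two-sided standard Brownian motion, with convergence in $C(\R)$ equipped with uniform convergence on compacts.

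Next I would rewrite every quantity entering $|\RR_n \setminus \RR_n^\delta|$ as a continuous functional of $W_n$ and of the rescaled maximum of the walk. Introduce
\[
\tau = \min\big\{ s \geq 0 : W(s) - \min_{u \in [0,s]} W(u) \geq 1/\sigma \big\}, \qquad b = \argmin_{u \in [0,\tau]} W,
\]
and the analogue $\tau_\delta$ corresponding to the level $(1-\delta)/\sigma$ above $W(b)$. With $c_n^\delta$ defined as the first site past $b_n$ at which the potential has risen by $(1-\delta)\log n$ above $V(b_n)$, the continuous mapping theorem gives the joint convergence $(b_n, c_n^\delta, c_n)/\log^2 n \to (b, \tau_\delta, \tau)$ (the $\sqrt{\log n}$ correction in \eqref{equa:c_n} being killed by the division). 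Classical Sinai-type estimates then add $M_n/\log^2 n \to M \in [\tau_\delta, \tau]$ jointly, where $M_n := \max_{t<n} X_t$, so that $\RR_n = \{1,\ldots,M_n\}$. Decomposing
\[
\big|\RR_n \setminus \RR_n^\delta\big| = \big|[1, b_n] \setminus \GG_n^\delta\big| + \big|(c_n^\delta, M_n] \big|,
\]
continuous mapping yields \eqref{eq:zd} with
\[
Z(\delta) = \mathrm{Leb}\big\{ s \in [0,b] : \sup_{u \in [s,b]} W(u) - W(s) < \delta/\sigma \big\} + (M - \tau_\delta)_+.
\]

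For \eqref{eq:zd0}, both terms in $Z(\delta)$ vanish as $\delta \downarrow 0$. For the first, the set in the Lebesgue measure is non-decreasing in $\delta$, and its intersection over $\delta>0$ equals $\{s \in [0,b] : \sup_{[s,b]} W = W(s)\}$. For $s<b$ this would force $W$ to be non-increasing on $[s,b]$ (since $W(b)=\min_{[0,\tau]} W \leq W(s)$), which is excluded a.s.\ by the nowhere-monotonicity of Brownian paths; so the intersection is $\{b\}$, of Lebesgue measure $0$, and monotone convergence gives the limit $0$. For the second term, $\tau_\delta \uparrow \tau \geq M$ by continuity of $W$, whence $(M - \tau_\delta)_+ \downarrow 0$ a.s. Hence $Z(\delta) \to 0$ almost surely, \emph{a fortiori} in $\PPs$-probability.

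The main obstacle I anticipate is justifying the joint convergence of $M_n/\log^2 n$ alongside the rescaled potential, i.e.\ pinning down the limit $M$ in $[\tau_\delta, \tau]$. One needs an upper bound (the walker does not cross the barrier $\log n + \sqrt{\log n}$ at $c_n$ in time $n$, with high probability) and a matching lower bound ensuring $M_n$ reaches at least $c_n^\delta$ so that the right-end term of the decomposition is controlled. Both facts are classical in the Sinai regime via hitting-time arguments for the reversible chain with conductances $\exp[-V(x)]$, but assembling them jointly with Donsker to feed the continuous mapping argument will require some coupling or tightness work; everything else is routine.
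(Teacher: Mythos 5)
Your strategy is the same as the paper's: Donsker rescaling of $V$, joint convergence of $b_n/\log^2 n$, $c_n^\delta/\log^2 n$ and $\max\Rn/\log^2 n$, identification of $Z(\delta)$ as the Lebesgue measure of a Brownian functional, then monotone convergence as $\delta\to 0$. The "main obstacle" you flag (the limit $M$ of $\max\Rn/\log^2 n$) is handled in the paper simply by invoking classical Sinai-regime results (\cite[Sect.~2.5]{ZeitouniSF}), which give the joint convergence with $M=c_\8$ (your $\tau$, up to the $\sigma$-scaling); so that part is a matter of citation, not of new work, and your sandwich $M\in[\tau_\delta,\tau]$ would indeed suffice for \eqref{eq:zd0} even without identifying $M$.

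There is, however, a genuine error in your proof of \eqref{eq:zd0} for the first term. You assert that if some $s<b$ satisfies $\sup_{u\in[s,b]}W(u)=W(s)$, then $W$ must be non-increasing on $[s,b]$, and you conclude that $\bigcap_{\delta>0}A_\delta=\{b\}$. Neither claim is correct: the condition only says that $W$ stays below the level $W(s)$ on $[s,b]$, which does not force any monotonicity (the path can oscillate arbitrarily below that level), and the limiting set $\{s\le b:\ W(s)=\sup_{[s,b]}W\}$ is the set of running-maximum times of the time-reversed path, which is a.s.\ uncountable — for instance it contains $\argmax_{[0,b]}W$, which is a.s.\ different from $b$ — so it is certainly not the singleton $\{b\}$. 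What you actually need, and what is true, is only that this set has zero Lebesgue measure a.s. This is exactly the point the paper proves, by Fubini: $\EEs\,\mathrm{Leb}(A')=\int_0^\8 \PPs(u<b_\8,\ u\in A')\,\dd u$, and for fixed $u$ the event that $W(u)\ge W(v)$ for all $v\in[u,u+\alpha]$ has probability zero by the law of the iterated logarithm. Replacing your nowhere-monotonicity argument by this Fubini/LIL step (or by noting via Lévy's theorem that the reversed running-maximum set has the law of a Brownian zero set, hence null Lebesgue measure) repairs the proof; the remainder of your argument — $\tau_\delta\uparrow\tau\ge M$ killing the right-hand term, and monotone convergence for the left-hand term — matches the paper.
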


\begin{proof} By the invariance principle,
  \begin{equation}
    \label{eq:invpr}
    \left(\frac{V([u  \log^2   n])}{\log   n};  u   \geq  0   \right)
    \xrightarrow[n \to \infty]{\mbox{{\small law}}} (W(u); u \geq 0),
  \end{equation}
  with  $W$ a  standard  Brownian  motion. Recall  the  definition of  $
  c_n^\delta$, $ \DD_n^\delta=]b_n,c_n^\delta]$ 
  with $ \DD_n^\delta$ from (\ref{equa:D_n}), and define
  \begin{align*}
    c_\8=  \inf\big\{ u& >0:  W(u)-\min_{v \in [0,u]}W(v)  \geq 1\big\},
    \qquad b_\8= \argmin_{v \in [0,c_\8]}W(v),\\
    &c_\8^\delta=  \min\big\{ u >b_\8:  W(u)-W(b_\8)  \geq 1- \delta \big\}.
  \end{align*}
  By  (\ref{eq:invpr})  and from  well-known  results  on RWRE  (e.g.,
  \cite[Sect. 2.5]{ZeitouniSF}), we have the joint convergence of 
  \[  \frac{b_n}{\log^2   n}  \xrightarrow[n  \to  \infty]{\mbox{{\small
        law}}} b_\8,\qquad 
  \frac{c_n^\delta }{\log^2   n}  \xrightarrow[n  \to  \infty]{\mbox{{\small
        law}}} c_\8^\delta,
  \qquad \frac{\max   \Rn}{\log^2  n}  \xrightarrow[n  \to
  \infty]{\mbox{{\small law}}} c_\8.
  \]
  Then, the  convergence in (\ref{eq:zd}) follows, with  the limit given
  by
  \begin{align*}
    Z(\delta) 
    &={\rm Leb} \Big( \big\{ u<b_\8  \,:\, \max_{v \in [u,b_\8]} W(v) - W(u)
    \leq \delta \big\} \cup [c_\8^\delta, c_\8] \Big)\\
    &=Z_1(\delta)+ Z_2(\delta) 
  \end{align*}
  with   $ {\rm Leb}(A) $ the  Lebesgue measure of a
  Borel  set $A$  and  $Z_2(\delta) =  c_\8-c_\8^\delta$.   It is  not
  difficult to see  that $ c_\8^\delta \nearrow c_\8$  a.s. as $\delta
  \searrow 0$, and then $Z_2(\delta)$ vanishes.
  Let us prove in details that $Z_1(\delta)$ vanishes. Letting
  \[
  A_\delta = \big\{ u<b_\8  \,:\, \max_{v \in [u,b_\8]} W(v) - W(u)
  \leq \delta \big\},
  \]
  we have, as $\delta \searrow 0$,
  \[
  A_\delta \searrow A_{0^+} \subset A'=
  \big\{ u<b_\8  \,:\,  W(u) \geq  W(v) , v \in [u,b_\8]\big\} .
  \] 
  By Fubini, we compute
  \begin{align*}
    \EEs {\rm Leb} (A') &= \int_0^\8 \PPs( u < b_\8, u \in A') du ,
  \end{align*}
  and we show that the integrand is zero. For all $u \geq 0$,
  \begin{align*}
    \PPs( u  < b_\8, u  \in A') &=  \lim_{\alpha \searrow 0} \PPs(  u <
    b_\8, u \in A', u \leq b_\8 + \alpha)\\ 
    &\leq  \limsup_{\alpha \searrow  0} \PPs(  W(u)  \geq W(v)  , v  \in
    [u,u+\alpha] ), 
  \end{align*}
  and  the  last  probability   is  zero  by  Iterated  Logarithm  law
  \cite[Th. 9.12]{KaratzasShreeve}). 
  Finally,  $\lim_{\delta \to 0}  {\rm Leb}(A_\delta)=0$  a.s., ending
  the proof of (\ref{eq:zd0}).  
\end{proof}


\begin{lemm} \label{lemm:xi_omega} 
Let Assumptions~\ref{as:recvar} and~\ref{as:ell} hold, let $\eps'$ and $\eps''$
be   such   that   \eqref{equa:choix_eps} and \eqref{equa:choix_eps2}   are  satisfied,   and   let
$A^\delta_n(\eps'')$ and $A^\delta_n(\eps'')$  be  the  events defined  by  \eqref{equa:eventA}. 
Then the following convergence holds 
\begin{equation} \label{equa:prob_A_complement}
 \PPs   \Big( {}^{\complement}A^\delta_n(\eps')   \Big), \ \PPs   \Big( {}^{\complement}A^\delta_n(\eps'')   \Big)
  \xrightarrow[n \to \infty]{} 0, \quad \mbox{for any $\delta$ in $(0,1)$}.
\end{equation}
\end{lemm}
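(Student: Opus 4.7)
\textbf{Proof strategy for Lemma~\ref{lemm:xi_omega}.} Since $\eps'' \leq \eps'$, it is enough to treat a generic $\tilde\eps \in (0, \eps_0/2]$. First I would reduce the claim to a bound on $|\xi^+(n,x)/\xi(n-1,x) - \omega_x|$. Using $\xi(n-1,x)=\xi^+(n,x)+\xi^-(n,x)$ and $1/\rho_x = \omega_x/(1-\omega_x)$, a direct computation gives
\[
\frac{\xi^+(n,x)}{\xi^-(n,x)} - \frac{1}{\rho_x} = \frac{\xi^+(n,x)/\xi(n-1,x) - \omega_x}{\bigl[1-\xi^+(n,x)/\xi(n-1,x)\bigr](1-\omega_x)}.
\]
Because $\omega_x \in [\eps_0, 1-\eps_0]$, the denominator exceeds a fixed constant $\kappa_0>0$ depending only on $\eps_0$, as soon as $|\xi^+(n,x)/\xi(n-1,x) - \omega_x| \leq \eps_0/2$. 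It thus suffices to prove that the latter deviation is smaller than $\kappa_0\tilde\eps$ uniformly over $x\in\RR_n^\delta$, with probability tending to $1$.

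Next I would use the standard driving Bernoulli representation of the quenched walk: enlarging the probability space if needed, there exist conditionally on $\omega$ independent families $(\zeta_k^{(x)})_{k\geq 1}$ of i.i.d.\ $\mathrm{Bernoulli}(\omega_x)$ variables such that at the $k$-th visit to $x$ the walk steps to $x+1$ iff $\zeta_k^{(x)}=1$; in particular $\xi^+(n,x) = \sum_{k=1}^{\xi(n-1,x)} \zeta_k^{(x)}$. Since the $\zeta_k^{(x)}$ are i.i.d.\ regardless of the value of $\xi(n-1,x)$, Hoeffding's inequality applied for each fixed $k$ yields
\[
P_\omega\!\left(\Big|\sum_{j=1}^k (\zeta_j^{(x)} - \omega_x)\Big| > k\kappa_0\tilde\eps\right) \leq 2e^{-2k(\kappa_0\tilde\eps)^2}.
\]
Partitioning over the values of $\xi(n-1,x)$ and summing the geometric-type series over $k\geq n^{\delta/2}$ gives
\[
P_\omega\!\left(\Big|\frac{\xi^+(n,x)}{\xi(n-1,x)}-\omega_x\Big|>\kappa_0\tilde\eps,\ \xi(n-1,x)\geq n^{\delta/2}\right) \leq C\,e^{-c\, n^{\delta/2}}
\]
for positive constants $c,C$ depending only on $\tilde\eps$ and $\eps_0$, uniformly in $x$ and in $\omega$.

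The final step is a union bound over $x\in\RR_n^\delta$ together with the large-local-time event. By construction $\RR_n^\delta\subset[0,c_n]$, and Lemma~\ref{lemm:Z} (via the invariance principle recalled there) ensures $\PPs(c_n > A\log^2 n)$ can be made arbitrarily small by choosing $A$ large. Lemma~\ref{lemm:excursion}, which states $\PPs(E_n^\delta)\to 1$ with $E_n^\delta$ defined by~\eqref{equa:eventE}, provides the hypothesis $\xi(n-1,x)\geq n^{\delta/2}$ for every $x\in\RR_n^\delta$. Assembling these ingredients,
\[
\PPs\bigl({}^\complement A_n^\delta(\tilde\eps)\bigr) \leq \PPs\bigl({}^\complement E_n^\delta\bigr) + \PPs\bigl(c_n>A\log^2 n\bigr) + A\log^2(n)\cdot C\,e^{-c\,n^{\delta/2}},
\]
and the right-hand side vanishes as $n\to\infty$. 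The main delicate point is the interaction between the random upper summation limit $\xi(n-1,x)$ and the concentration bound; it is resolved by conditioning on the value of $\xi(n-1,x)$ and exploiting that the driving Bernoullis are i.i.d.\ for any fixed prefix length. Once that is handled, the stretched-exponential Hoeffding estimate easily absorbs the polylogarithmic cardinality of $\RR_n^\delta$.
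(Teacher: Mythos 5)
Your proposal is correct and follows essentially the same route as the paper: a quenched Hoeffding bound for $\xi^+(n,x)$ around $\omega_x\,\xi(n-1,x)$, converted into the statement on $\xi^+/\xi^-$ versus $1/\rho_x$ via ellipticity, then a union bound over $x\in\RR_n^\delta$ combined with the event $E_n^\delta$ of Lemma~\ref{lemm:excursion} and the fact that $c_n=O_{\PPs}(\log^2 n)$, plus the inclusion $A_n^\delta(\eps'')\subset A_n^\delta(\eps')$. Your only deviations are cosmetic: you use the driving-Bernoulli representation with a union bound over prefix lengths $k\geq n^{\delta/2}$ (a clean way to handle the random local time, where the paper conditions on $\xi(n-1,x)=m$ and sums the conditional binomial bounds), and you truncate $c_n$ at $A\log^2 n$ instead of $\log^3 n$.
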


\begin{proof}
Under  $P_\omega(\cdot  |  \xi_(n-1,x)=m)$,  the pair  $(  \xi^+(n,x),
\xi^-(n,x))$ is distributed  as $({\mathrm B}(m, \omega_x), m-{\mathrm
  B}(m,  \omega_x))$,  where $\mathrm{B}(m,q)$  is  a binomial  random
variable with  sample size $m$  and probability of success  $q$. Using
\citeauthor{Hoeffding}'s concentration inequality yields 
\begin{equation}\label{eq:devloctime} 
  P_\omega \left( \Big\vert \frac{\xi^+(n,x)} {m} - \omega_x \Big\vert
    >   \alpha   \,  \Big|   \,   \xi(n-1,x)=m   \right)  \leq   2\exp
  [-2m\alpha^2], \quad \forall \alpha > 0.
\end{equation}

Assume that the event $\{\xi(n-1,x)= {m}\}$ occurs. Then, we have
\[ 
  \Big\vert \frac{\xi^+(n,x)} { \xi^-(n,x)} - \frac{1} {\rho_x} \Big\vert 
  = 
  \Big\vert f \Big(\frac{\xi^+(n,x)} {m} \Big) -f(\w_x) \Big\vert
  \leq \sup_{I} f' \cdot \Big\vert \frac{\xi^+(n,x)} {m} - \w_x \Big\vert,
\]
with 
\[
  f(u)=        \frac{u}{1-u}       \quad        \mbox{and}       \quad
  I=\left[   \frac{\xi^+(n,x)}{m}\wedge   \omega_x,  \frac{\xi^+(n,x)}
    {m} \vee \omega_x \right]. 
\]
Under   Assumption  \ref{as:ell},   uniform  ellipticity   occurs  and
\eqref{eq:devloctime} implies that we can find a constant $K>0$
depending on $\eps''$ only such that
\begin{equation} \label{equa:concentration}
  P_\omega \left( \Big\vert \frac{\xi^+(n,x)} { \xi^-(n,x)} - \frac{1}
    {\rho_x} \Big\vert  > \eps'' \,  \Big| \, \xi(n-1,x)=m  \right) \leq
  K^{-1} \exp[-K m].
\end{equation}
Recall the event $E^\delta_n$ defined by~\eqref{equa:eventE} 
and denote its complement by $^{\complement}E^\delta_n$.  We have
\[
  P_\omega\left( {}^{\complement}A^\delta_n(\eps'')  \right) 
  \leq 
  P_\omega\left( {}^{\complement}A^\delta_n(\eps''), E^\delta_n\right) 
 + P_\omega\left(^{\complement}E^\delta_n\right). 
\]
Now,
\[
  P_\omega\left( {}^{\complement}A^\delta_n(\eps''), E^\delta_n\right)
  \leq    \sum_{x    \in    \Rn^\delta}    P_\omega\left(    \Big\vert
    \frac{\xi^+(n,x)}  { \xi^-(n,x)} -  \frac{1} {\rho_x}  \Big\vert >
    \eps'', \xi(n-1,x) \geq n^{\delta/2} \right),
\]
and writing $\left\{\xi(n-1,x) \geq n^{\delta/2} \right\} = \bigcup_{m\geq
  n^{\delta/2}   } \{ \xi(n-1,x)=m\}$  and  using~\eqref{equa:concentration}
yield
\[
  P_\omega\left( {}^{\complement}A^\delta_n(\eps''), E^\delta_n\right)
  \leq
  \sum_{x \in  \Rn^\delta} \sum_{m \geq n^{\delta/2}  } K^{-1} \exp[-K
  m] \cdot P_\omega(\xi(n-1,x)=m).
\]
Using the fact that 
\[
 \exp[-K m] \leq \exp[-K n^{\delta/2} ], \quad \mbox{for any
$m \geq n^{\delta/2}$},
\]
the fact that 
\[
  \sum_{m \geq n^{\delta/2} } P_\omega(\xi(n-1,x)=m) \leq 1,
\]
and the fact that $ |\Rn^\delta| \leq c_n$, yield
\begin{equation} \label{equa:interQuenched}
  P_\omega\left( {}^{\complement}A^\delta_n(\eps''), E^\delta_n\right)
  \leq K^{-1} c_n \exp[-K n^{\delta/2} ].
\end{equation}
Then, combining~\eqref{equa:interQuenched} and 
\[
\PPs\left( {}^{\complement}A^\delta_n(\eps''), E^\delta_n\right) 
= \Es
\left(        P_\omega\left(       {}^{\complement}A^\delta_n(\eps''),
    E^\delta_n\right)  \right)
\]
yields 
\[
\PPs\left( {}^{\complement}A^\delta_n(\eps''), E^\delta_n\right)  
\leq  
K^{-1}  \exp[-K  n^{\delta/2}  ] \log^3 n + \Ps(c_n > \log^3 n),
\]
and   from   the  fact   that   $c_n   /   \log^2  n$   converges   in
$\Ps$-distribution to $c_\infty$, we deduce that 
\[
  \PPs\left(   {}^{\complement}A^\delta_n(\eps''),   E^\delta_n\right)
  \xrightarrow[n  \to \infty]{}  0,  \quad \mbox{for  any $\delta$  in
    $(0,1)$}.
\]
From Lemma~\ref{lemm:excursion} below, we have
\[
  \PPs\left(^{\complement}E^\delta_n\right)=                        \Es
  \left(P_\omega\left(^{\complement}E^\delta_n\right) \right) 
  \xrightarrow[n \to \infty]{} 0, \quad \mbox{for any $\delta$ in $(0,1)$},
\]
and this achieves the proof of \eqref{equa:prob_A_complement} since $A^\delta_n(\eps'') \subset A^\delta_n(\eps')$.
\end{proof}


\begin{lemm} \label{lemm:excursion}
Let Assumptions~\ref{as:recvar} and~\ref{as:ell} hold   and   let
$E^\delta_n$  be  the  event  defined  by~\eqref{equa:eventE}.
Then the following convergence holds
\begin{equation} \label{equa:prob_E_complement}
  \PPs\left(^{\complement}E^\delta_n\right)     \xrightarrow[n     \to
  \infty]{} 0, \quad \mbox{for any $\delta$ in $(0,1)$}. 
\end{equation}
\end{lemm}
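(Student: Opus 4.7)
The plan is to establish a quenched lower bound of order $n^{\delta}$ on $E_\omega[\xi(n,x)]$ for every $x \in \Rn^{\delta}$, combine it with a concentration argument giving $P_\omega(\xi(n,x) < n^{\delta/2}) \to 0$ at an exponential rate, and then conclude by a union bound over the $|\Rn^{\delta}| \leq c_n$ sites — a polylogarithmic bound holding with high $\Ps$-probability, as already used in Lemma~\ref{lemm:Z} — followed by an integration over the environment.

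Fix a typical $\omega$. The walk is reversible with conductances $C(y,y+1)=e^{-V(y)}$ and stationary measure $\mu$. For $x \in \DD_n^{\delta}$, the walk is localized near $b_n$: by Corollary~\ref{coro:GPS} applied to a suitable continuous functional, $\xi(n,b_n)$ is of order $n$ with high $\PPs$-probability, so the walk performs of order $n$ excursions from $b_n$. By the standard electrical-network formula, each such excursion reaches $x$ with quenched probability at least of order $\exp[-(\max_{z\in[b_n,x]} V(z)-V(b_n))] \geq n^{\delta-1}$, since the barrier between $b_n$ and $x$ is bounded by $(1-\delta)\log n$. This yields $E_\omega[\xi(n,x)] \gtrsim n^{\delta}$.

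For $x \in \GG_n^{\delta}$, the visits to $x$ come mainly from the initial passage of the walk from $0$ to $b_n$. The hypothesis $\max_{z\in[x,b_n]} V(z)-V(x) \geq \delta\log n$ says that $x$ lies at the bottom of a well of depth at least $\delta\log n$ which the walk must cross on its way to $b_n$. A gambler's-ruin computation in the quenched potential shows that the number of visits to $x$ while trapped in this well has quenched mean of order $e^{\delta\log n} = n^{\delta}$; hence $E_\omega[\xi(n,x)] \gtrsim n^{\delta}$ again. In both cases $\xi(n,x)$ is essentially a sum of nearly independent variables indexed by excursions, so a second-moment / Chebyshev argument yields
\[
P_\omega\bigl(\xi(n,x) < n^{\delta/2}\bigr) \leq \exp(-n^{c\delta})
\]
for some constant $c>0$, uniformly on a typical $\omega$-event.

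Taking annealed expectation and summing over the $O(\log^2 n)$ sites of $\Rn^{\delta}$ then gives $\PPs({}^{\complement}E_n^{\delta}) \to 0$. The main obstacle will be the case $x \in \GG_n^{\delta}$: since $x$ sits to the left of $b_n$ and is visited only during a single, delicate passage across the hill separating it from $b_n$, one must decompose the hitting time of $b_n$ into sub-excursions within the well containing $x$, verify that these behave like nearly independent geometric variables, and check that the quenched concentration holds on a $\Ps$-event of probability tending to one; this last point is non-trivial because the geometry of the well is random and the corresponding quenched means fluctuate with $\omega$.
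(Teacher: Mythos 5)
Your treatment of $\DD_n^\delta$ is essentially the paper's argument: excursions from $b_n$ (order $n/\gamma_n$ of them, with $\gamma_n\leq 2(1+c_n)$), each hitting $x$ with quenched probability at least $\eps_0 c_n^{-1}n^{\delta-1}$ by the electrical-network formula, so that $\xi(n,x)$ dominates a binomial with mean $\gg n^{\delta/2}$, and a union bound over at most $c_n=O(\log^3 n)$ sites (on an event of $\Ps$-probability tending to one). The only slip there is quantitative: Chebyshev does not give $\exp(-n^{c\delta})$; you need the Chernoff bound for the binomial, which is what the paper uses and which is a harmless repair.

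The genuine gap is in the $\GG_n^\delta$ case, and your proposed fix does not close it. The number of visits to $x$ before the walk crosses the barrier of height $\geq\delta\log n$ to reach $b_n$ is (given at least one visit) a \emph{single} geometric random variable $h_n(x)$ with success probability $<n^{-\delta}$; its mean is indeed of order $n^\delta$, but it does not concentrate around that mean: its standard deviation is of the same order as its mean, so a second-moment/Chebyshev bound for $P_\omega(\xi(n,x)<n^{\delta/2})$ gives a constant, not $\exp(-n^{c\delta})$, and no decomposition into ``nearly independent sub-excursions'' can produce concentration for what is intrinsically one geometric trial. The correct step — and what the paper does — is to bound the lower tail directly: $P_\omega\bigl(1\leq \xi(n,x)\leq n^{\delta/2}\bigr)\leq P_\omega\bigl(1\leq h_n(x)\leq n^{\delta/2}\bigr)\leq 1-(1-n^{-\delta})^{n^{\delta/2}}\approx n^{-\delta/2}$, which is only polynomially small but suffices against the union over $O(\log^3 n)$ sites. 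In addition, you never rule out $\xi(n,x)=0$ for $x\in\GG_n^\delta$: a large conditional mean says nothing if the walk has not yet reached $x$ by time $n$. The paper handles this separately via Golosov's Lemma~1, $\PPs(T(b_n)>n)\to 0$, together with the inclusion $\{\exists x\in\GG_n^\delta : \xi(n,x)=0\}\subset\{\xi(n,b_n)=0\}$; some such ingredient is indispensable and is missing from your outline.
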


\begin{proof} 

Note that
\[
  \PPs\left(^{\complement}E^\delta_n\right) \leq  \PPs\left( \exists x
    \in  \GG_n^\delta  \,  :  \,  \xi(n,x) <  n^{\delta/2}  \right)  +
  \PPs\left(  \exists   x  \in  \DD_n^\delta   \,  :  \,   \xi(n,x)  <
    n^{\delta/2} \right) ,
\]
where  $\GG_n^\delta$  and   $\DD_n^\delta$  are  respectively  defined
by~\eqref{equa:G_n} and~\eqref{equa:D_n}. We first show that 
\begin{equation}
  \label{equa:chernovDn}
  \PPs\left(\exists x  \in
    \DD_n^\delta \, : \, \xi(n,x) < n^{\delta/2}\right) \xrightarrow[n
  \to   \infty]{} 0, \quad \mbox{for any $\delta$ in $(0,1)$}.
\end{equation}
Let  $x$ be  in $\DD_n^\delta$.  Define  $T(y)=\inf\{t\geq 1  \, :  \,
X_t=y\}$ the  first hitting time  of~$y$. The probability  of visiting
$x>b_n$ during a given excursion 
from $b_n$ to $b_n$ is (e.g., \cite[formula (2.1.4)]{ZeitouniSF})
\begin{equation} \label{equa:visite} 
  P_\w(T(x) < T(b_n) \, | \, X_0 =
b_n)    =    \frac{\w_{b_n}}{\sum_{j=b_n}^x    \exp[V(j)-V(b_n)]}    >
\frac{\eps_0}{c_n} \cdot n^{\delta-1},
\end{equation}
where  the lower bound  follows from  \eqref{eq:assumpa} and  the fact
that $x$ belongs to $\DD_n^\delta$.


Fix $\eps>0$ and let $k_n$ denote the number of excursions of $X$ from
$b_n$ to $b_n$ before time $n$.  From (2.14) in \cite{GPS}, we have
\[ P_{\omega}  \Big( \Big|\frac  {k_n}n - \frac1{\gamma_n}  \Big| \geq
\eps  \Big)  \xrightarrow[n \to  \infty]{}  0,  \quad \mbox{for  \quad
$\Ps$-a.a. $\w$},
\]
where $\gamma_n$ is  the average length of an  excursion from $b_n$ to
$b_n$. Let $F_n$ denote  the event $F_n=\left\{ k_n\geq n \cdot \gamma_n^{-1}
/2\right\}$.
We have
\[ 
  P_\omega \left( \exists x \in \DD_n^\delta \, : \, \xi(n,x) <
n^{\delta/2}, \, F_n \right)
\leq
\sum_{x \in \DD_n^\delta}  P_\omega \left( \xi(n,x) < n^{\delta/2},
\, F_n \right).
\]
Using the independence of excursions from $b_n$ to $b_n$ yields
\[ 
  P_\omega \left( \xi(n,x) < n^{\delta/2}, \, F_n \right) 
  \leq 
  \mathrm{Prob}  \big( \mathrm{B}\left(n\cdot  \gamma_n^{-1}/2, \eps_0
    c_n^{-1} n^{\delta-1} \right) \leq n^{\delta/2}\big),
\]
where $\mathrm{B}(m,q)$ is a binomial random variable with sample size
$m$ and probability of success $q$. Combining the Chernov's bound
\[
\mathrm{Prob} \big( \mathrm{B} (m,q) \leq m(q-\alpha)\big) \leq \exp
  \big[ -m \KL(q-\alpha|q) \big].
\]
with  $m=n\cdot  \gamma_n^{-1}/2$, $q=\eps_0c_n^{-1}n^{\delta-1}$  and
$m(q-\alpha)  = n^{\delta/2}$,  the  fact that  $ |\DD_n^\delta|  \leq
c_n$, that
\[
  \gamma_n = \sum_{x=0}^{c_n} \frac{\mu(x)}{\mu(b_n)} \leq 2(1+c_n),
\]
and that
\[
\frac{c_n }{\log^2   n}  \xrightarrow[n  \to  \infty]{\mbox{{\small
        law}}} c_\8,
\]
yield~\eqref{equa:chernovDn}. 

Now, we turn to the case where
$x$ is  in $\GG_n^\delta$,  and a different  argument is  needed. From
Lemma 1 in \cite{Golosov}, 
\[
  \PPs(T(b_n)> n) \xrightarrow[n \to \infty]{} 0,
\]
hence,
\[
  \PPs(\xi(n,b_n)=0) \xrightarrow[n \to \infty]{} 0.
\]
Note that
\[
  \big\{ \exists x \in \GG_n^\delta  \, : \, \xi(n,x)=0\big \} \subset
  \big \{ \xi(n,b_n)=0 \big \}. 
\]
The probability of visiting $b_n$ during a given excursion from $x$ to $x$ is
\begin{equation} \label{equa:visite2} 
  P_\w(T(b_n) < T(x) \, | \, X_0 = x) = \frac{\w_{x}}{\sum_{j=x}^{b_n}
    \exp[V(j)-V(x)]} < n^{-\delta},
\end{equation}
where  the  larger  bound  follows   the  fact  that  $x$  belongs  to
$\GG_n^\delta$.  Let $h_n(x)$ denote the number of 
returns to $x$ before reaching $b_n$. From \eqref{equa:visite2}, this 
is   a  geometric   variable  with   success  probability   less  than
$n^{-\delta}$.  Now, 
\begin{align*} 
  P_\omega  \left(\exists x  \in \GG_n^\delta  \, :  \,  \xi(n,x) \leq
    n^{\delta/2} \right) 
  \leq  P_\omega \left(\exists  x \in  \GG_n^\delta \,  : \,  1 \leq
    \xi(n,x) \leq n^{\delta/2} \right)&\\
  + P_\omega \left(\exists x \in \GG_n^\delta \, : \,
    \xi(n,x)=0 \right)& 
\end{align*}
Hence,
\begin{align*}
  P_\omega \left(\exists x \in \GG_n^\delta \, : \, 1 \leq \xi(n,x) \leq
  n^{\delta/2} \right)
  &\leq
  \sum_{x  \in \GG_n^\delta}  P_\omega \left(\,  1 \leq  \xi(n,x) \leq
    n^{\delta/2} \right) \\
  &\leq
  \sum_{x  \in \GG_n^\delta}  P_\omega \left(\,  1 \leq  h_n(x) \leq
    n^{\delta/2} \right)\\
  &\leq |\GG_n^\delta| \cdot [1 - (1-n^{-\delta})^{n^{\delta/2}}]\\
  &\leq c_n \cdot [1 - (1-n^{-\delta})^{n^{\delta/2}}].
\end{align*}
Hence,
\[
\PPs\left(\exists x \in \GG_n^\delta \, : \, 1 \leq \xi(n,x) \leq
  n^{\delta/2}  \right)   \leq  [1  -  (1-n^{-\delta})^{n^{\delta/2}}]
\log^3 n + \Ps(c_n > \log^3 n), 
\]
which  combined with  the fact  that $  c_n /\log^2  n $  converges in
distribution to $c_\8$ proves that
\[
\PPs\left( \exists x
    \in \GG_n^\delta \, : \, \xi(n,x) < n^{\delta/2} \right). \qedhere
\]
\end{proof}

\section{Examples} \label{sect:ex}


\subsection{Particular Case: recurrent Temkin model} \label{sect:ex1}

\begin{exam} \label{ex:Temkin} 
Let  $\eta  =  \frac12  \delta_{a}  +
\frac12 \delta_{1-a}$.  Here, the unknown parameter is  the support $a
\in \Theta \subset (0,1/2) $ (namely~$\theta=a$).
\end{exam}

\begin{prop} \label{prop:Temkin} In the framework of 
Example~\ref{ex:Temkin},   the   limiting   function  $L_\8$   defined
by~\eqref{eq:Linfini} is deterministic and given by
\[ 
 L_\8(a) = - \big[ H(a^\star) + \KL(a^\star | a) \big].
\]
\end{prop}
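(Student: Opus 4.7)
The plan is to apply identity~\eqref{eq:Lentropy} directly and exploit the symmetry $\eta = \tfrac{1}{2}\delta_a + \tfrac{1}{2}\delta_{1-a}$ to simplify both sums term by term. By Remark~\ref{rema:valeurs_omega_tilde}, for every $x \in \Z$ the random value $\widetilde{\omega}(x)$ lies in the support $\{a^\star, 1-a^\star\}$ of $\eta_{\ts}$. Since the Boltzmann entropy satisfies $H(q) = H(1-q)$, one gets $H[\widetilde{\omega}(x)] = H(a^\star)$ deterministically for every realization. Using that $\nu$ is a probability measure on $\Z$, this immediately yields
\[
\sum_{x \in \Z} \nu(x) H[\widetilde{\omega}(x)] = H(a^\star).
\]

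Next, I would show that the inner minimum $\min_i \KL[\widetilde{\omega}(x)\,|\,a_i]$ is also constant and equal to $\KL(a^\star\,|\,a)$. A short calculation gives
\[
\KL(q\,|\,a) - \KL(q\,|\,1-a) = (2q-1)\log\frac{1-a}{a}.
\]
Under the standing assumption $a, a^\star \in (0, 1/2)$, the factor $\log((1-a)/a)$ is positive. Taking $q = a^\star < 1/2$ gives $\KL(a^\star\,|\,a) < \KL(a^\star\,|\,1-a)$, so the min is achieved at $a$. Taking $q = 1-a^\star > 1/2$, the symmetry $\KL(1-q\,|\,1-a') = \KL(q\,|\,a')$ shows that $\KL(1-a^\star\,|\,1-a) = \KL(a^\star\,|\,a) < \KL(1-a^\star\,|\,a)$, and the min is again $\KL(a^\star\,|\,a)$. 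Summing against $\nu$ therefore gives
\[
\sum_{x \in \Z} \nu(x) \min_i \KL[\widetilde{\omega}(x)\,|\,a_i] = \KL(a^\star\,|\,a),
\]
and combining the two identities in~\eqref{eq:Lentropy} yields the claimed closed form for $L_\infty(a)$.

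There is no substantive obstacle in this proof; the content is really the observation that the symmetry $a_2 = 1 - a_1$ together with $p_1 = p_2$ makes both $H[\widetilde\omega(x)]$ and the index achieving $\min_i \KL[\widetilde\omega(x)|a_i]$ independent of the random variable $\widetilde\omega(x)$, so that the random landscape contribution $\nu(\cdot)$ simply integrates to one. It is worth emphasizing that this is exactly the mechanism which degenerates $L_\infty$ to a deterministic function in this example, and which would generically fail as soon as the symmetry between the two atoms is broken.
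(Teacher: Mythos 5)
Your proof is correct and follows essentially the same route as the paper: both exploit the symmetry $\{\widetilde\omega(x),1-\widetilde\omega(x)\}=\{a^\star,1-a^\star\}$ to show the summand in~\eqref{eq:Linfini} is $\nu(x)$ times a constant, the paper computing the maximum directly while you phrase it via the identity~\eqref{eq:Lentropy} (which the paper itself uses for the analogous Propositions~\ref{prop:2points} and~\ref{prop:lazy}), so the two arguments are algebraically equivalent. Your sign computation $\KL(q|a)-\KL(q|1-a)=(2q-1)\log\frac{1-a}{a}$ and the use of $a,a^\star\in(0,1/2)$ are valid and match the paper's implicit verification that the max is attained at $a$.
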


\begin{proof} Note that
\begin{equation} \label{equa:omega2points} \{\widetilde \omega (x), 1-
\widetilde \omega (x)\} =  \{ a^\star,1-a^\star\}, \quad \mbox{for any
$x \in \Z$}.
\end{equation}
Recalling  that $\nu^+(x)  =  \nu(x) \cdot  \widetilde \omega(x)$  and
$\nu^-(x) = \nu(x) \cdot [1-\widetilde \omega(x)]$ and noting that
\[ 
    \max_{u \in \{a,1-a\}}
\Big\{ a^\star \log u + (1-a^\star)\log (1-u) \Big\} = 
a^\star \log a + (1-a^\star) \log (1 - a),
\]
implies that~\eqref{eq:Linfini} might be rewritten as
\[ 
 L_\8(a) = \Big[ a^\star \log a + (1-a^\star) \log (1 - a) \Big]
\sum_x \nu(x).
\]
The fact that $\sum_{x \in \Z} \nu(x)=1$ achieves the proof.
\end{proof}


\subsection{Case of two-point support}
\begin{exam} \label{ex:2points} 
Let  $\eta =  p_1 \delta_{a_1}  + p_2
\delta_{a_2}$ with $a_1 <1/2<a_2$, and
\begin{equation}
\nn       p_1=\frac{\log       \frac{1-a_1}{a_1}}{\log
\frac{a_2(1-a_1)}{a_1(1-a_2)}},          \quad          p_2=\frac{\log
\frac{a_2}{1-a_2}}{\log \frac{a_2(1-a_1)}{a_1(1-a_2)}}.
\end{equation}
Here, the unknown parameter is the support $\ab=(a_1,a_2)$.
\end{exam}

The following result extends Proposition \ref{prop:Temkin}.

\begin{prop} \label{prop:2points} 
  In the framework of Example~\ref{ex:2points}, the limiting function
   $ L_\8$ defined by~\eqref{eq:Linfini} is deterministic and given by
\[ 
 L_\8(\ab) = - \frac{1}{a_2^\star-a_1^\star} \sum_{j=1}^2
\Big\vert   a_j^\star-\frac{1}{2}\Big\vert    \big[   H(a_j^\star)   +
\min_{i=1,2} \KL(a_j^\star | a_i) \big].
\]
\end{prop}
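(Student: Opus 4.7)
\textbf{Proof plan for Proposition \ref{prop:2points}.}

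My starting point is the identity \eqref{eq:Lentropy}, which in the two–point case simplifies considerably because Remark \ref{rema:valeurs_omega_tilde} guarantees that $\widetilde\omega(x) \in \{a_1^\star, a_2^\star\}$ for every $x \in \Z$. Partitioning the sum according to the value of $\widetilde\omega(x)$, one can write
\begin{equation*}
L_\infty(\ab) \;=\; -\sum_{j=1}^{2} \pi_j\,\bigl[H(a_j^\star) + \min_{i=1,2} \KL(a_j^\star \mid a_i)\bigr],
\qquad
\pi_j := \sum_{x \in \Z} \nu(x)\,\1\{\widetilde\omega(x) = a_j^\star\}.
\end{equation*}
Thus the proposition reduces to showing that the (a priori random) weights $\pi_1, \pi_2$ are in fact deterministic and equal to $|a_{3-j}^\star - 1/2|/(a_2^\star - a_1^\star)$ (up to the labelling convention used in the statement).

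The key observation is that in the two–point case one has two independent linear constraints on $(\pi_1,\pi_2)$, which pin them down uniquely. The first is just normalisation: since $\sum_x \nu(x) = 1$,
\begin{equation*}
\pi_1 + \pi_2 = 1.
\end{equation*}
The second comes from reversibility. From the explicit formulas \eqref{equa:Nu2} one sees that $\nu^+(x) = \nu^-(x+1)$ for every $x$, so summing gives $\sum_x \nu^+(x) = \sum_x \nu^-(x) = 1/2$. Since $\nu^+(x) = \nu(x)\widetilde\omega(x)$, this yields
\begin{equation*}
a_1^\star\, \pi_1 + a_2^\star\, \pi_2 \;=\; \sum_{x} \nu(x)\,\widetilde\omega(x) \;=\; \tfrac{1}{2}.
\end{equation*}
These two relations form a $2\times 2$ linear system with a nonzero determinant (as $a_1^\star \neq a_2^\star$ by Assumption \ref{as:ell}), so they determine $\pi_1$ and $\pi_2$ uniquely and deterministically as affine functions of $a_1^\star, a_2^\star$; solving them and substituting into the expression for $L_\infty(\ab)$ produces the closed form stated in the proposition.

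The only mildly subtle point is the identity $\sum_x \nu^+(x) = 1/2$: it holds almost surely because the conditioning in the definition of $\widetilde V$ leaves the reversibility structure intact, so the symmetry between forward and backward steps at the level of the normalising constant is preserved. Everything else is bookkeeping: once the two constraints are in place, the coefficients in $L_\infty(\ab)$ are fixed, and the Temkin case (Proposition \ref{prop:Temkin}) can be recovered as a sanity check by specialising to $a_2^\star = 1 - a_1^\star$, where $H(a_1^\star) = H(a_2^\star)$ and the two $\min_i \KL(\cdot \mid a_i)$ terms coincide.
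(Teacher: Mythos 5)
Your proposal is correct and follows essentially the same route as the paper: the paper likewise starts from \eqref{eq:Lentropy}, splits the sum according to whether $\widetilde\omega(x)=a_1^\star$ or $a_2^\star$, and determines the two weights from normalisation together with the zero-drift identity $\sum_x \nu(x)(2\widetilde\omega(x)-1)=0$, obtained exactly as you obtain it from $\sum_x\nu^+(x)=\sum_x\nu^-(x)$ (a consequence of $\nu^+(x)=\nu^-(x+1)$). Your remark that solving the $2\times 2$ system attaches the weight $\big|a_{3-j}^\star-\tfrac12\big|/(a_2^\star-a_1^\star)$ to the term involving $a_j^\star$ is indeed what the computation yields, so your bookkeeping is consistent with the paper's argument.
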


\begin{proof} 
By \eqref{eq:Lentropy},
\[
  L_\8(\ab) = - \sum_{j=1}^2 \nu^{(j)} \cdot \big[ H(a_j^\star) +
  \min_{i=1,2} \KL(a_j^\star |  a_i) \big], \qquad \nu^{(j)}= \sum_{x:
    \widetilde \omega (x)=a^\star_j} \nu(x). 
\]
So,  the  formula  for  $L_\8(\ab)$  follows from  the  values  of  the
coefficients $\nu^{(j)}$: these are determined by
\[
  \sum_{j=1}^2 \nu^{(j)}=1, \qquad \sum_{j=1}^2 \nu^{(j)} \cdot 
  (2a_j^\star-1)=0, 
\]
where the  second equality means  that the mean  drift is zero  in the
recurrent  case. Indeed,  the  drift per  time  unit on  time
interval  $[0,n]$,  $n^{-1}  \sum_{t=0}^{n-1}  (2  \w_{X_t}-1)  =
\sum_x \nu_{n}(x) (2 \widetilde \omega  (x) -1)$, vanishes as $n \to
\8$  by the  law  of large  numbers  for martingales.  For a  direct
derivation,    we    can     write,    from    \eqref{equa:Nu2}    and
\eqref{equa:tildeOmega},
\begin{eqnarray}\nn   \sum_x  \nu(x)   \widetilde   \omega(x)  =\sum_x
\nu^+(x) =\sum_x \nu^-(x)= \sum_x \nu(x) (1-\widetilde \omega(x)),
\end{eqnarray} and then  $ \sum_x \nu(x) (2 \widetilde  \omega (x) -1)
=0, $ which,  after reorganizing the terms, gives  the second equality
above. 
\end{proof}


\subsection{Particular      Case:      recurrent      lazy      Temkin
model} \label{sect:lazy} 

We  present a  simple  example  for which  $L_\8(\cdot)$  is a  random
variable.

\begin{exam} \label{ex:lazy} 
Let $\eta = \frac{1-r}{2} \delta_{a} +
r  \delta_{1/2} +\frac{1-r}{2}  \delta_{1-a}  $ with  $a  <1/2, r  \in
(0,1)$.  Here, we have $\ab=(a,\frac 1 2,1-a)$ and
$\pb=(\frac{1-r}{2},r,\frac{1-r}{2})$, and the unknown parameter is $(a,r)$.
\end{exam} 
\begin{prop} 
\label{prop:lazy} 
In the framework of 
Example~\ref{ex:lazy}, there exists $a' \in (0,a^\star)$, depending
only  on  $a^\star$,  such   that  the  limiting  function  $ L_\8(\ab)$ is
deterministic when $a \in (0,a']$ and random when $a \in
(a',1/2)$: 
\begin{equation*}
  {\Var}^\star( L_\8(\ab) )>0.
\end{equation*}
\end{prop}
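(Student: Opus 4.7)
The starting point is formula \eqref{eq:Lentropy},
\[
L_\infty(\ab) = -\sum_{x\in\Z} \nu(x)\bigl[ H(\widetilde \omega(x)) + \min_{i=1,2,3} \KL(\widetilde \omega(x)|a_i)\bigr],
\]
together with Remark \ref{rema:valeurs_omega_tilde}, which says $\widetilde \omega(x) \in \{a^\star, 1/2, 1-a^\star\}$ under $\Ps$. Introduce the random variable $\Lambda := \sum_x \nu(x)\,\1\{\widetilde \omega(x) \neq 1/2\}$. When $\widetilde \omega(x) = 1/2$ the minimum is $0$ (attained at $a_2 = 1/2$) and $H(1/2) = \log 2$. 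When $\widetilde \omega(x) \in \{a^\star, 1-a^\star\}$, the symmetry $\KL(1-p|1-q)=\KL(p|q)$ combined with $a < 1/2$ gives $\KL(a^\star|1-a) > \KL(a^\star|1/2)$, so the minimum reduces to $\min\{\KL(a^\star|a), \KL(a^\star|1/2)\}$, the same value for $\widetilde\omega(x) = a^\star$ and for $\widetilde\omega(x) = 1 - a^\star$.

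I would next identify the threshold. Since $a \mapsto \KL(a^\star|a)$ is continuous and strictly decreasing on $(0,a^\star)$ from $+\infty$ to $0$, there is a unique $a' \in (0,a^\star)$ with $\KL(a^\star|a') = \KL(a^\star|1/2)$; plainly $a'$ depends only on $a^\star$. Using the algebraic identity $H(p) + \KL(p|q) = -p\log q -(1-p)\log(1-q)$, one has $H(a^\star)+\KL(a^\star|1/2) = \log 2$ and $H(a^\star)+\KL(a^\star|a) = -a^\star\log a - (1-a^\star)\log(1-a)$. Substituting into \eqref{eq:Lentropy}, the contribution per site is $-\log 2 \cdot \nu(x)$ uniformly in $x$ when $a \in (0, a']$, whence $L_\infty(\ab) = -\log 2$ is deterministic. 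When $a \in (a', 1/2)$ I obtain
\[
L_\infty(\ab) = -\log 2 + c(a)\,\Lambda, \qquad c(a) = \log 2 + a^\star \log a + (1-a^\star)\log(1-a),
\]
and $c(\cdot)$ vanishes exactly at $a'$ and $1/2$ (by construction) and is strictly positive on the open interval $(a', 1/2)$ since $\KL(a^\star|a) < \KL(a^\star|1/2)$ there.

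It remains to show $\Var^\star(\Lambda) > 0$. Because $r^\star \in (0,1)$ and $a^\star < 1/2$, the increments of $\widetilde V$ take the value $0$ (giving $\widetilde \omega(x) = 1/2$) and the values $\pm \log[(1-a^\star)/a^\star]$ (giving $\widetilde \omega(x) \in \{a^\star, 1-a^\star\}$) with positive probability at every site. The plan is to exhibit, for some $K$ large enough and $\varepsilon$ small, two events of positive $\Ps$-probability: on the first, $\widetilde \omega(x) = 1/2$ for all $|x| \leq K$ and $\widetilde V$ grows sufficiently outside $[-K,K]$ that $\nu$ is nearly concentrated there, so $\Lambda \leq \varepsilon$; on the second, $\widetilde \omega(x) \in \{a^\star,1-a^\star\}$ for all $|x| \leq K$ with the same concentration outside, forcing $\Lambda \geq 1-\varepsilon$. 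The main obstacle is controlling the law of the conditioned potential $\widetilde V$ sufficiently to decouple the behaviour inside the window from the growth requirement outside; this should follow from the Golosov construction of $\widetilde V$ as a pair of random walks conditioned to stay positive, together with standard estimates showing that any prescribed finite-window increment pattern of the unconditioned walk occurs with uniformly positive probability jointly with a favourable tail event for the remainder.
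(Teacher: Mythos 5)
Your reduction of the problem is correct and is essentially the paper's: from \eqref{eq:Lentropy}, Remark~\ref{rema:valeurs_omega_tilde}, the symmetry $H(a^\star)=H(1-a^\star)$ and $\min_{\uab}\KL(a^\star|\cdot)=\min_{\uab}\KL(1-a^\star|\cdot)$, and the threshold $a'$ defined by $\KL(a^\star|a')=H(1/2)-H(a^\star)$, you get $L_\8(\ab)=-\log 2+c(a)\,\Lambda$ with $\Lambda=1-\nu^{(1/2)}$ and $c(a)=\KL(a^\star|1/2)-\KL(a^\star|a)>0$ exactly on $(a',1/2)$; the deterministic regime $a\in(0,a']$ is fully proved and agrees with the paper.

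The gap is in the only genuinely delicate step, $\Var^\star(\nu^{(1/2)})>0$, which you leave as a plan. Two concrete issues. First, the event ``$\widetilde\omega(x)=1/2$ for all $|x|\leq K$'' is $\Ps$-null: since $\widetilde V(-1)>0$ (in fact $\widetilde V(-1)=\log\frac{1-a^\star}{a^\star}$), one has $\widetilde\omega(0)=1-a^\star\neq 1/2$ a.s.; this is fixable (impose the pattern only on $1\leq x\leq K$, the site $0$ carrying mass $O(1/K)$), but as stated the event you propose to exhibit has probability zero. Second, the decoupling you identify as ``the main obstacle'' --- positive probability of a prescribed finite-window pattern for the \emph{conditioned} walk $\widetilde V$, jointly with control of $\sum_{x\notin[0,K]}e^{-\widetilde V(x)}$ so that $\nu$ concentrates on the window --- is exactly the content that must be supplied, and you only assert it should follow from ``standard estimates''. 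The paper's route avoids these estimates: with $T=\inf\{x\geq 1 : \widetilde V(x)=\log\frac{1-a^\star}{a^\star}\}$, Golosov's construction gives $\Ps(\widetilde V(x+1)=0\,|\,\widetilde V(x)=0)=r$, so $T$ is geometric and, by the strong Markov property, independent of $\FF=\sigma(\widetilde V(x) : x\leq 0 \mbox{ or } x\geq T)$; conditionally on $\FF$, $\nu^{(1/2)}$ is the non-constant monotone function $t\mapsto (t-1+\Upsilon')/(t-1+\Upsilon)$ of $T$ with $\Upsilon>\Upsilon'$ both $\FF$-measurable, whence $\Var^\star[\nu^{(1/2)}|\FF]>0$ and the total-variance decomposition concludes. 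If you keep your two-event approach you must prove (i) positivity of the pattern probabilities for the $h$-transformed walk and (ii) that, conditionally on the pattern (Markov property at the window's right end), the outside sum is a.s. finite with a law not depending on $K$, hence $o(K)$ with conditional probability close to one; neither is hard, but without them the randomness assertion --- the point of the proposition --- is not established.
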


\begin{proof} 
Set $\uab=\{a,1/2,1-a\}$. By \eqref{eq:Lentropy},
\begin{align*}
 L_\8(\ab) = &- \nu^{(a^\star)} \cdot \big[ H(a^\star) +
\min_{\uab} \KL(a^\star | \cdot) \big] \\
&\qquad- \nu^{(1-a^\star)} \cdot 
\big[  H(1-a^\star)  +  \min_{\uab}  \KL(1-a^\star |  \cdot)  \big]  -
\nu^{(1/2)} \cdot H(1/2), 
\end{align*}
with
\[ \nu^{(u)}= \sum_{x:  \widetilde \omega (x)=u} \nu(x), \quad
u \in \uab.
\]
So,  the  formula  for  $L_\8(\ab)$  follows from  the  values  of  the
coefficients $\nu^{(u)}$: these satisfy
\[ \sum_{u  \in \uab} \nu^{(u)}=1, \qquad  \sum_{u \in \uab}
\nu^{(u)} (2 u -1)=0,
\]
where  the  second  equality  is  equivalent to  $  \nu^{(a^\star)}  =
\nu^{(1-a^\star)}$. Hence, we have  $ 2\nu^{(a^\star)} + \nu^{(1/2)} =
1$. Furthermore, noting that
\[   H(a^\star)=H(1-a^\star)   \quad   \mbox{and}   \quad   \min_{\uab}
\KL(a^\star | \cdot) =\min_{\uab} \KL(1-a^\star | \cdot)
\]
yields
\begin{equation}
  \label{equa:L_Lazy}
  L_\8(\ab) = \nu^{(1/2)}  \big[ H(a^\star) + \min_{\uab} \KL(a^\star |
  \cdot) -  H(1/2) \big] -  \big[ H(a^\star) + \min_{\uab}  \KL(a^\star |
  \cdot) \big].
\end{equation}
One can see that in~\eqref{equa:L_Lazy}, the quantities 
\[
\big[ H(a^\star) + \min_{\uab} \KL(a^\star | \cdot) - H(1/2) \big]
\quad \mbox{and} \quad
\min_{\uab} \KL(a^\star | \cdot) \big]
\]
are  deterministic.   Hence, $L_\8(\ab)$  is  random  if  and only  if
$\nu^{(1/2)}$   is  random   and  $\big[   H(a^\star)   +  \min_{\uab}
\KL(a^\star | \cdot) - H(1/2) \big]$ is non zero. 

The  facts that $\KL(a^\star  | \cdot)$  is continuous,  decreasing on
$(0,a^\star)$, increasing on $(a^\star,1)$, null at $a^\star$ and that
\[ \KL(a^\star  | 1/2)  = H(1/2) -  H(a^\star) \quad  \mbox{and} \quad
\KL(a^\star | a) \xrightarrow[a \to 0]{} + \infty,
\]
 yield the existence of a unique $a' \in (0,a^\star)$ such that
 \[ \KL(a^\star | a') = H(1/2) - H(a^\star),
 \]
 and as a consequence that for any $a \in (0,a']$,
\[ 
\min_{\uab} \KL(a^\star | \cdot) = \KL(a^\star | a') = H(1/2) - H(a^\star).
\]
Hence, the  limiting function $ L_\8(\ab)$ is  deterministic and equal
to $- H(1/2)=-\log 2$, for any $a \in (0,a']$. 
  
Now, let $a$ be in $(a',1/2)$. Recall that
\[ 
  \nu^{(1/2)}=  \sum_{x:  \widetilde  \omega  (x)=1/2}  \nu(x),  \quad
  \mbox{with}     \quad    \nu(x)     =     \frac{    \exp[-\widetilde
    V(x-1)]+\exp[-\widetilde V(x)]  }{2\sum_{z\in \Z} \exp[-\widetilde
    V(z)]}, 
\]
and that  $\widetilde \omega (x)=1/2 $  if and only if  $\widetilde V(x) =
\widetilde  V(x-1)$.  Now,  we  focus  on the  potential  of the  infinite
valley. Recall  that $\widetilde V(0)=0,$  $\{ \widetilde V(x)\}_{x \geq  0} $
and $\{ \widetilde V(-x) \}_{x  \geq 0}$ are two independent Markov chains
where  $\widetilde V(x)$  is  non-negative for  any  non-negative $x$  and
$\widetilde   V(x)$    is   positive    for   any   negative    $x$,   see
\cite{Golosov}. Especially, we have 
\[ 
  \Ps  \big(\widetilde V(x+1)=0  \,  | \,  \widetilde V(x)=0\big)=1  -
  \Ps \big(\widetilde V(x+1)= \log \rho^\star\, | \, \widetilde
  V(x)=0\big) =r,  
\]
where we set $\rho^\star=\frac{1-a^\star}{a^\star}$. Define the random variable $T
\geq 1$  as $ T  = \inf  \{ x \in  \N \, :  \, \widetilde V(x)  = \log
\rho^\star\}$. The random variable $T$ is 
distributed according  to a  geometric distribution with  parameter of
success $1-r$, and for any $0  \leq x \leq T-1$, the potential $\widetilde
V(x)$ is equal to $0$.  We have
\[ 
  \nu^{(1/2)}  =  \frac   {  \sum_{x<0}  \ee^{-\widetilde  V(x)}\1_{\widetilde
      V(x-1)=\widetilde    V(x)}    +(T-1)+\sum_{x=T}^{\infty}\ee^{-\widetilde
      V(x)}\1_{\widetilde V(x-1)=\widetilde V(x)}} 
  {\sum_{x\leq  0}  \ee^{-\widetilde  V(x)} +(T-1)  +  \sum_{x=T}^{\infty}
    \ee^{-\widetilde V(x)}} 
\]
Using the  definition of $(\widetilde  V(x) \,:\, x\in\Z)$ and  the strong
Markov  property, $T$  is independent  of $\FF:=\SS(\widetilde  V(x) \,:\,
x\leq 0 \mbox{ or } x \geq T)$, hence
\[ 
  \Var^\star[     \nu^{(1/2)}     |     \FF     ]     =     \Var^\star
  \left( \frac{T-1+\Upsilon'}{T-1+\Upsilon} \right), 
\]
with
\[ 
  \Upsilon'=\sum_{x<0;  x\geq T}  \ee^{-\widetilde  V(x)}\1_{\widetilde V(x-1)=\widetilde
    V(x)} 
  \quad \mbox{and} \quad 
  \Upsilon=\sum_{x\leq 0; x\geq T} \ee^{-\widetilde V(x)}.
\]
Since $\Upsilon  > \Upsilon',$ $\Var^\star[\nu^{(1/2)}|\FF]$  is a non
constant strictly positive random variable, hence
\[ 
  \Var^\star(\nu^{(1/2)})=\Es  \big( \Var^\star[\nu^{(1/2)}|\FF] \big)
  + \Var^\star( \Es[\nu^{(1/2)}|\FF] )>0. \qedhere 
\]
\end{proof}

\section{Numerical performance}
\label{sect:simus}

In this section, we explore the numerical performance of our 
estimation  procedure in  the  frameworks of  Examples~\ref{ex:Temkin}
to~\ref{ex:lazy}. We  compare our performance with  the performance of
the   estimator  proposed   by   \cite{AdEn}  in   the  framework   of
Example~\ref{ex:Temkin}. An explicit description of the form
of~\citeauthor{AdEn}'s  estimator  in   the  particular  case  of  the
one-dimensional nearest  neighbour path is provided in  Section 5.1 of
\cite{Comets_etal}. Therefore, one can  estimate  $\ts$ by  the  solution of  an
appropriate system of equations, as illustrated below.  

\paragraph*{Example~\ref{ex:Temkin}  (continued)}  In  this  case  the
parameter $\theta$ equals $a$ and we have 
\begin{align*} \label{eq:syst_eq_Temkin}
  v &= \Es[\omega_0] = \frac12 a^\star +
  \frac12 (1-a^\star)= \frac12,\\
  w &= \frac{\Es[\omega_0^2]}{\Es[\omega_0]} = \{\frac12 [a^\star]^2 + \frac12 (1-a^\star)^2\}
  \cdot v^{-1} = [a^\star]^2 + (1-a^\star)^2.
\end{align*}
Hence, among the visited sites, the proportion of those from which the
first move  is to the right (or  to the left) gives  no information on
the parameter $a^\star$. We need to push to the sites visited at least
twice to get some 
information. Among  those from which the  first move is  to the right,
the proportion of those from which 
the  second  move  is  also  to  the  right  gives  an  estimator  for
$[a^\star]^2 + (1-a^\star)^2$. Using this observation, we can estimate
$a^\star$.  
We  now   present  the  three   simulation  experiments  corresponding
respectively to Examples~\ref{ex:Temkin} to~\ref{ex:lazy}.  The
comparison with \citeauthor{AdEn}'s procedure is given only for 
Example~\ref{ex:Temkin}.   
For  each  of  the  three  simulations, we  \textit{a  priori}  fix  a
parameter  value $\ts$ as  given in  Table~\ref{tabl:theta_values} and
repeat 1,000 times the  procedure described below. 
\renewcommand{\arraystretch}{1.5}
\begin{table}[h]
  \centering
  \begin{tabular}{|c|c|}
    \hline
Simulation & Estimated parameter \\
    \hline
Example~\ref{ex:Temkin}  & $a^\star=0.3$ \\
    \hline
Example~\ref{ex:2points}  & $(a_1^\star, a_2^\star)=(0.4,0.7)$\\ 
    \hline
Example~\ref{ex:lazy}   & $(a^\star, r^\star)=(0.3,0.2)$ \\
    \hline
  \end{tabular}
  \caption{Parameter values for each experiment.}
  \label{tabl:theta_values}
\end{table}
We first generate a
random  environment  according  to  $\eta_\ts$  on  the  set  of  sites
$\{1, \dots, 10^5\}$.  In fact,  we do  not use the environment
values  for all the  sites, since  only few  of these
sites are  visited by the walk.  However the computation  cost is very
low comparing to the rest of the estimation procedure.  Then, we run a
random walk in this environment  and stop it successively at $n$, with
$n \in \{10^4\cdot k \, : \, 1\le k \le 10 \}$.  For each 
stop,  we  estimate  $\ts$   according  to  our  MPLE  procedure  (and
\citeauthor{AdEn}'s one for the first simulation).  In the
cases  of Examples~\ref{ex:Temkin}  and~\ref{ex:lazy},  the likelihood
optimization procedure  for the support  $a^\star$ was performed  as a
combination  of   golden  section  search   and  successive  parabolic
interpolation.    In  the   case  of   Example~\ref{ex:2points},  the
likelihood  optimization  procedure  for  $(a_1^\star,a_2^\star)$  was
performed according to the "L-BFGS-B" method 
of  \cite{ByrdEtAl} which  uses a  limited-memory modification  of the
``BFGS''  quasi-Newton  method  published  simultaneously in  1970  by
\citeauthor{Broyden}, \citeauthor{Fletcher}, \citeauthor{Goldfarb} and \citeauthor{Shanno}. 
In  all three cases,  the parameters  in Table~\ref{tabl:theta_values}
are  chosen such  that  the RWRE  is recurrent.

Figure~\ref{figu:compa} shows the  boxplots of our estimator
and \citeauthor{AdEn}'s  estimator obtained  from 1,000  iterations of
the  procedures in  Example~\ref{ex:Temkin}.  First,  we  shall notify
that in order to simplify the visualisation of the results, we removed
in  the boxplots corresponding  to Example~\ref{ex:Temkin}  about 13.5\% of
outliers values (outside  1.5 times the interquartile range above
the   upper  quartile  and   below  the   lower  quartile)   from  our
estimator.  Second,  we shall  also  notify  that  about  16\%  of
\citeauthor{AdEn}'s  estimation procedures could  not be  achieved for
the  simple reason  that  the  equation involving  $\abs$  had no solution.  We observe that our  procedure is far more
accurate than \citeauthor{AdEn}'s, and that the
accuracies of the procedure increase with the value of $n$. 

Figure~\ref{figu:boxplots_cas2}  shows the  boxplots of  our estimator
obtained    from    1,000    iterations    of   the    procedure    in
Example~\ref{ex:2points}.  Once again, we  shall notify
that in order to simplify the visualisation of the results, we removed
in  the boxplots corresponding  to Example~\ref{ex:2points}  about 13\% of
outliers values   from  both 
estimators of $a_1^\star$ and $a_2^\star$. We observe that the
accuracy of  the procedure  is high,  and that in  this case,  it does
increase with the value of $n$. 

Figure~\ref{figu:boxplots_cas3}  shows the  boxplots of  our estimator
obtained    from    1,000    iterations    of   the    procedure    in
Example~\ref{ex:lazy}. We  shall notify
that in order to simplify the visualisation of the results, we removed
in  the boxplots corresponding  to Example~\ref{ex:lazy}  about 12.3\% of
outliers values   from  our 
estimator of $a^\star$ and 2.8\% of
outliers values   from  our 
estimator of $r^\star$. We observe that the
accuracy of the procedure is high for the parameter of the support and
low for the probability  parameter, even if in both cases, the
accuracy increases with the value of $n$.

\begin{figure}[H]
 \centering
 \includegraphics[angle=270,width=12.5cm]{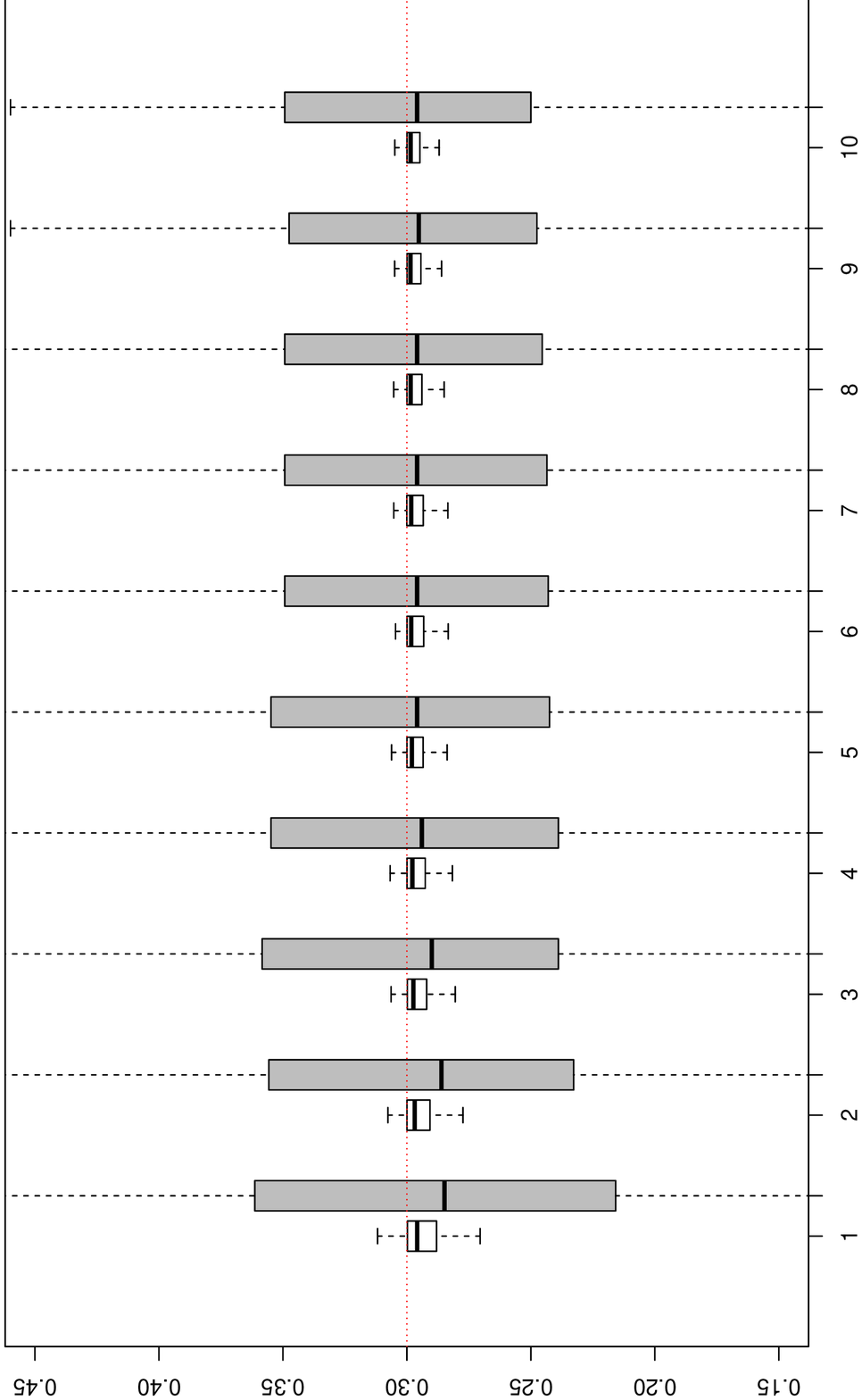} \\
 \caption{Boxplots   of   our   estimator   (left   and   white)   and
   \citeauthor{AdEn}'s estimator (right  and grey) obtained from 1,000
   iterations and for  values $n$ ranging in $ \{10^4 \cdot  k \, : \,
   1\le k  \le 10 \}$ ($x$-axis  indicates the value  $k$).  The panel
   displays  estimation of  $a^\star$ in  Example~\ref{ex:Temkin}. The
   true value is indicated by an horizontal line.}
 \label{figu:compa}
\end{figure}

\begin{figure}[H]
  \centering
  \includegraphics[angle=270,width=12.5cm]{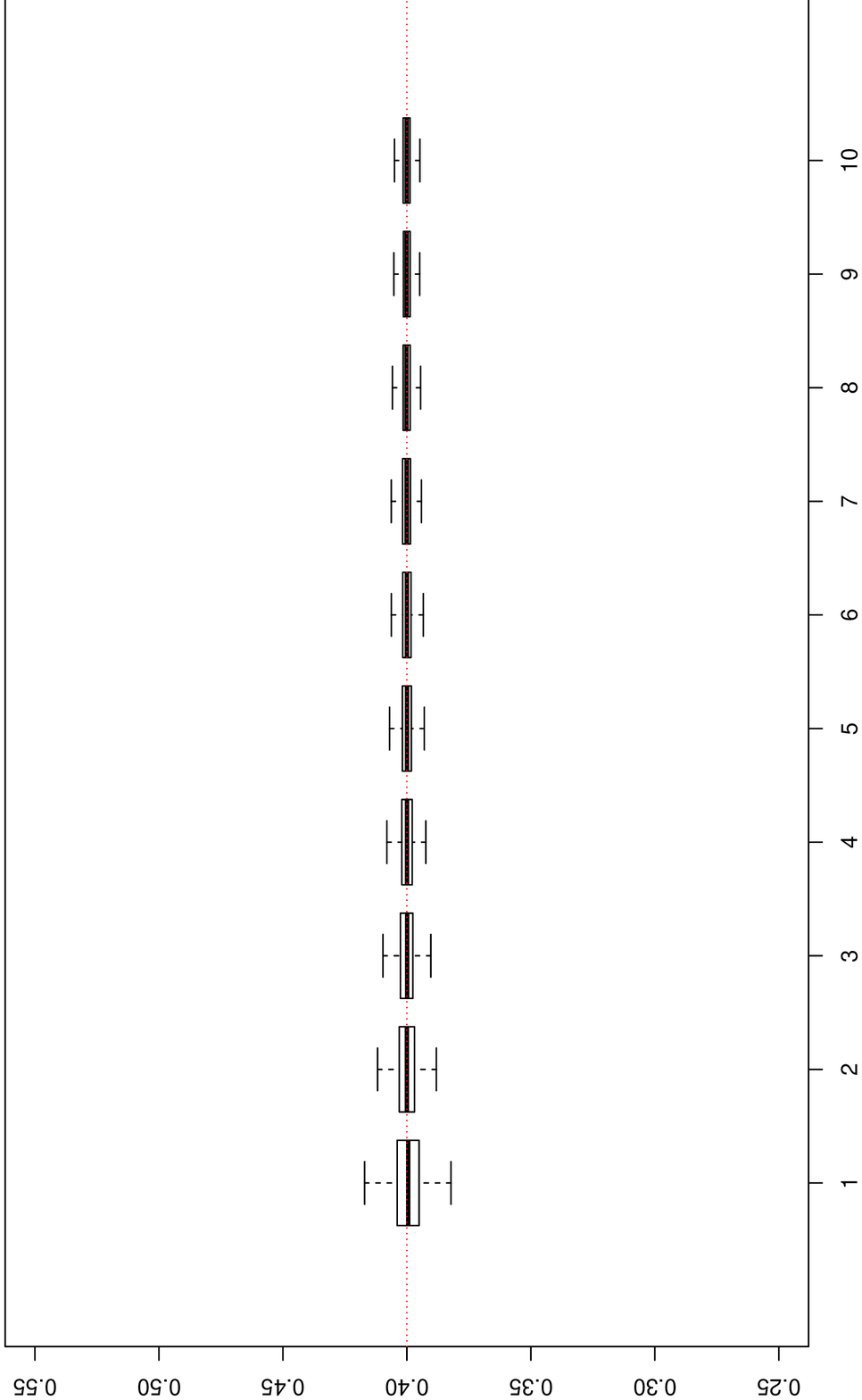} \\
  \includegraphics[angle=270,width=12.5cm]{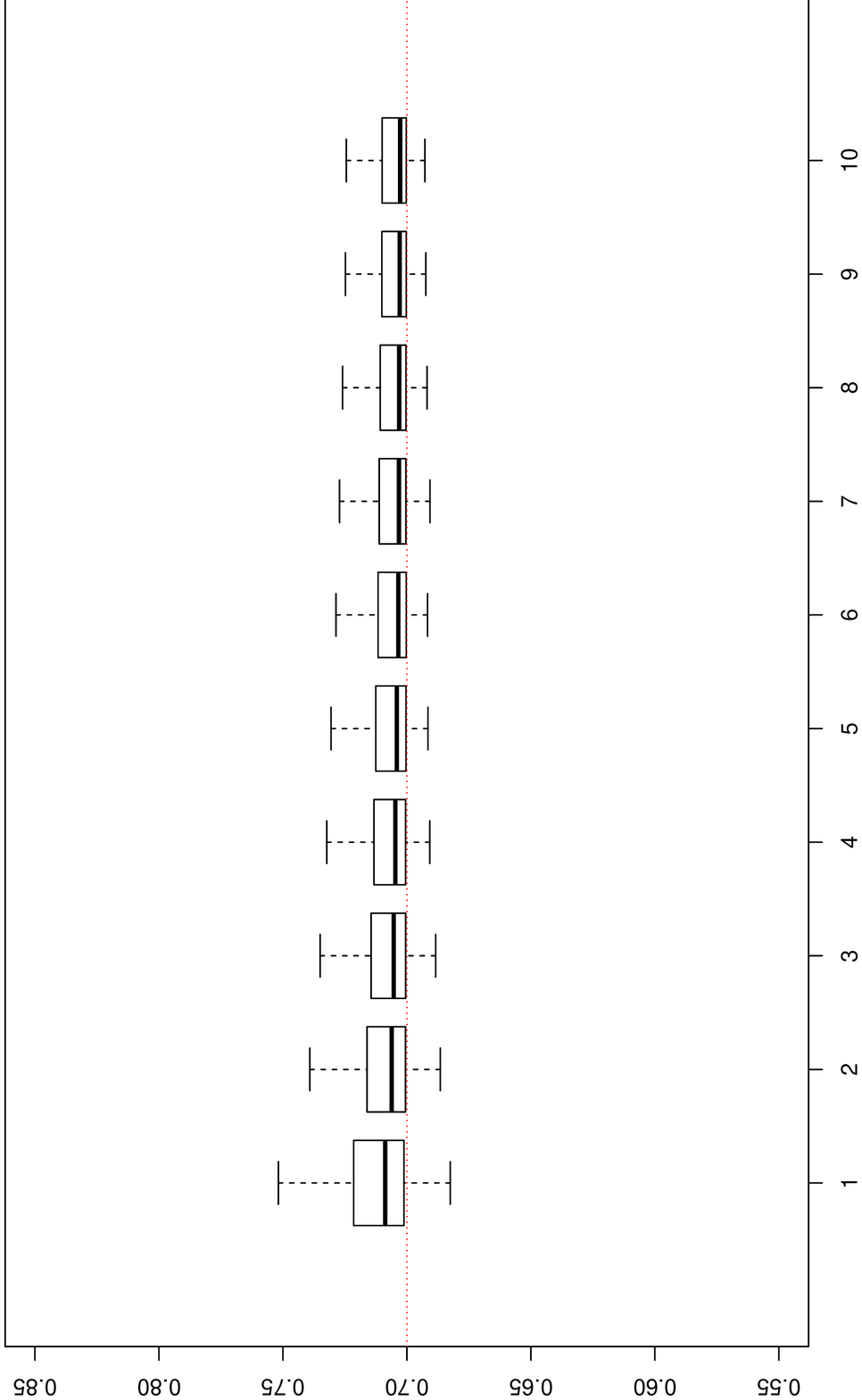} \\
  \caption{Boxplots   of   our   estimator   obtained from 1,000
    iterations in   Example~\ref{ex:2points} and for values $n$ ranging
    in $ \{10^4 \cdot k \, : \, 1\le k \le 10 \}$ ($x$-axis indicates the value $k$).
Estimation of $a_1^\star$ (top panel) and $a_2^\star$ (bottom panel). The true values
    are indicated by  horizontal lines.}
  \label{figu:boxplots_cas2}
\end{figure}

\begin{figure}[H]
  \centering
  \includegraphics[angle=270,width=12.5cm]{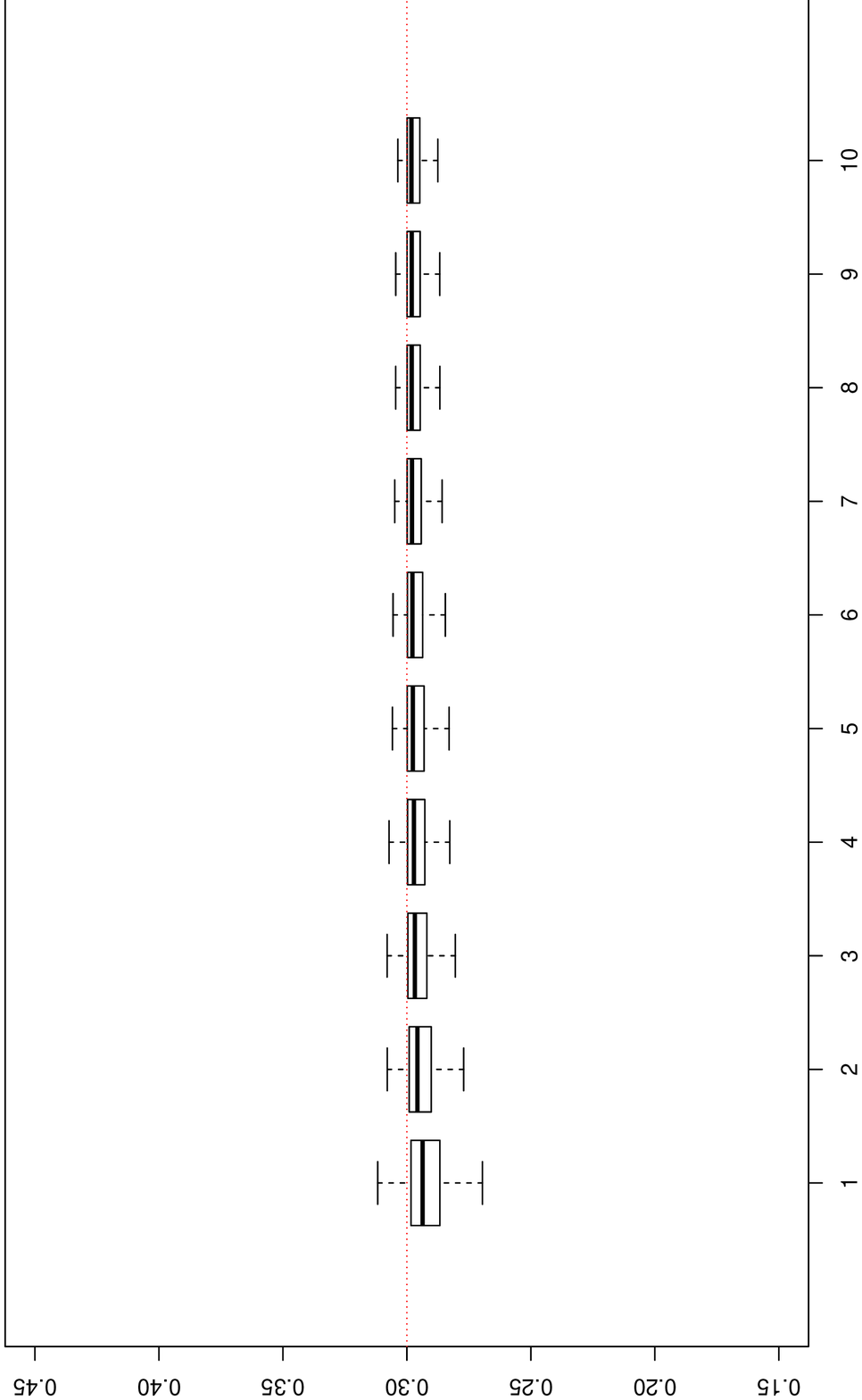} \\
  \includegraphics[angle=270,width=12.5cm]{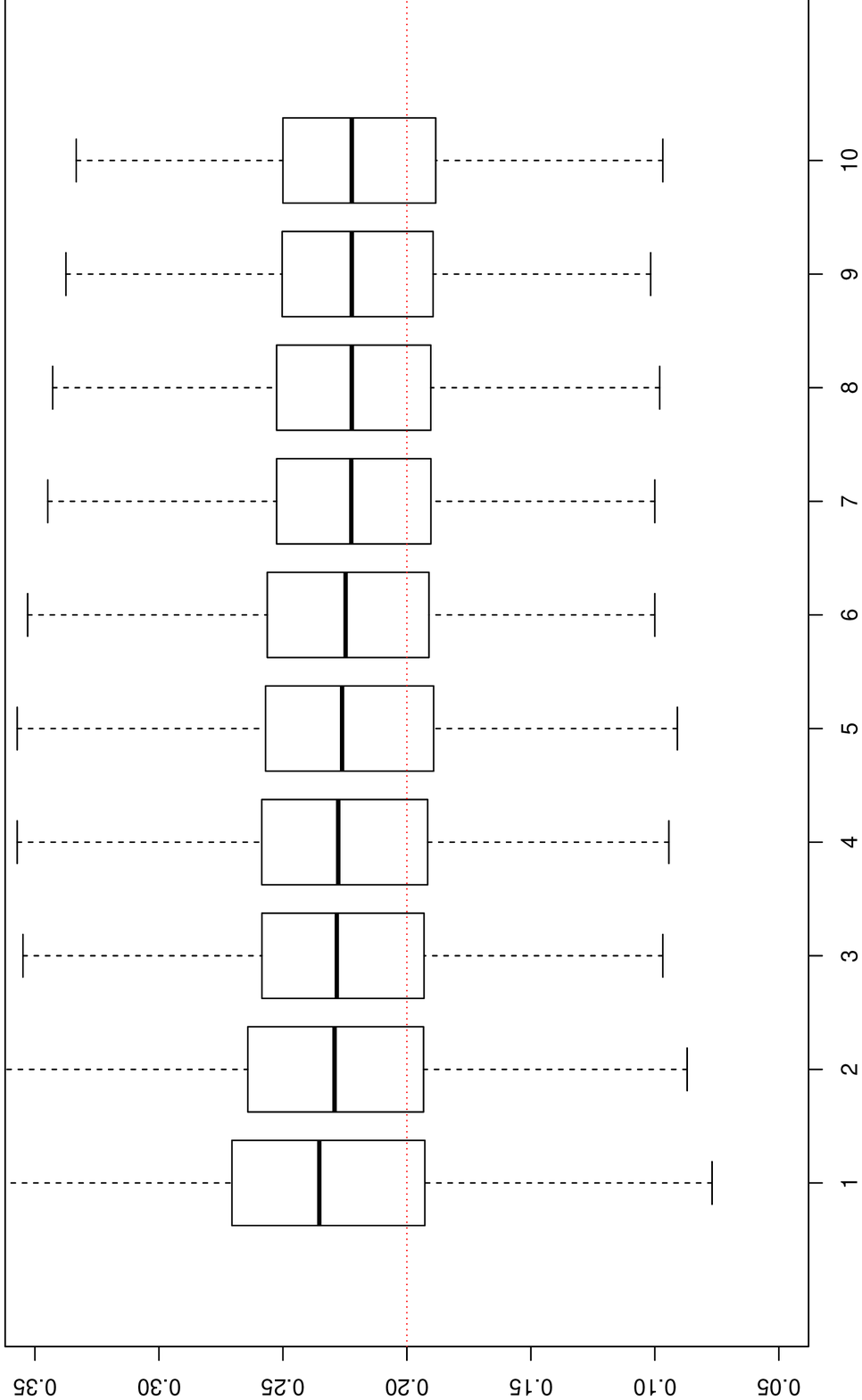} \\
  \caption{Boxplots   of   our   estimator   obtained from 1,000
    iterations in   Example~\ref{ex:lazy} and for values $n$ ranging
    in $ \{10^4 \cdot k \, : \, 1\le k \le 10 \}$ ($x$-axis indicates the value $k$).
Estimation of $a^\star$ (top panel) and $r^\star$ (bottom panel). The true values
    are indicated by  horizontal lines.}
  \label{figu:boxplots_cas3}
\end{figure}

\section*{Acknowledgments} 
The authors warmly thank Julien Chiquet for sharing many fruitful reflexions about  programming syntax and parallel computing. 

\bibliographystyle{chicago}
\bibliography{MAMAFrancis}

\end{document}